\journal{Journal of \LaTeX\ Templates}
\newcommand{\R}{{\Bbb R}}
\newcommand{\N}{{\Bbb N}}
\newcommand{\C}{{\Bbb C}}
\newtheorem{thm}{Theorem}
\newtheorem{lemma}[thm]{Lemma}
\newtheorem{corollary}[thm]{Corollary}
\newtheorem{proposition}[thm]{Proposition}
\newtheorem{remark}[thm]{Remark}
\newtheorem{example}[thm]{Example}
\newproof{proof}{Proof}
\begin{document}

\begin{frontmatter}
\title{Existence and uniqueness of monotone wavefronts in a nonlocal resource-limited model }

\author[a]{ Elena Trofimchuk\,}
\author[b]{ Manuel Pinto\,}
\author[c]{     Sergei Trofimchuk\footnote{Corresponding author.}}
\address[a]{Department of Differential Equations, Igor Sikorsky Kyiv Polytechnic Institute, Kyiv, Ukraine
\\ {\rm E-mail: trofimch@imath.kiev.ua}}
\address[b]{Facultad de Ciencias, Universidad de Chile, Santiago, Chile
\\ {\rm E-mail: pintoj@uchile.cl}}
\address[c]{Instituto de Matem\'atica y F\'isica, Universidad de Talca, Casilla 747,
Talca, Chile \\ {\rm E-mail: trofimch@inst-mat.utalca.cl}}

\begin{abstract}
We are revisiting the topic of travelling  fronts  for the food-limited (FL) model  with spatio-temporal nonlocal reaction. 
 These solutions are crucial for understanding the whole model dynamics. Firstly, we prove the existence of monotone wavefronts. In difference with all previous results formulated in terms of `sufficiently small parameters',  our existence theorem indicates 
 a reasonably broad and explicit range of the model key parameters allowing the existence of monotone waves.  Secondly,  numerical simulations  realized on the base of our analysis show appearance of non-oscillating and non-monotone travelling fronts  in the FL model. These waves were never observed before. Finally, invoking a new approach developed recently by Solar {\it et al}, 
we prove the uniqueness (for a fixed propagation speed, up to translation) of each monotone front. 
 \end{abstract}

\begin{keyword} food-limited model \sep monotone wave\sep nonlocal interaction\sep   existence\sep uniqueness 
 \MSC[2010] 34K60\sep 35K57\sep 92D25
\end{keyword}

\end{frontmatter}


\section{Introduction}

In this work, we consider  the following   food-limited (FL, for short) model \cite{FL,Gopal, GG,GC, OW,PR,Sm,SY,WL,Pr} with spatio-temporal nonlocal reaction 
\begin{equation}\label{i1}
\partial_tu(t,x)=\partial_{xx}u(t,x) +u(t,x)\left(\frac{1-(K *u)(t,x)}{1+\gamma (K*u)(t,x)}\right), \quad x \in \mathbb R.
\end{equation}
Here
$$
(K*u)(t,x) = \int_0^{+\infty}ds \int_\R K(s,y,\tau)u(t-s, x-y)dy, 
$$
and $K:\R_+\times\R \times\R_+  \to \R_+$ is a function satisfying
$$
\frak{I}_{c,\tau}(\lambda):= \int_{\R_+\times\R}  K(s,y,\tau)e^{-\lambda(cs+y)}dsdy \in \R, \ \  \frak{I}_{c,\tau}(0)=1, \ \lim_{\lambda\to- \lambda_0(c)^+}\frak{I}_{c,\tau}(\lambda)=+\infty, 
$$
for all $\lambda \in (-\lambda_0(c), \lambda_1(c)), \ c >0$, and some  $\lambda_0(c), \lambda_1(c) \in (0,+\infty]$. Parameter $\tau  >0$ takes into account the delayed effects in the model  and it is related to the average maturation time. Note that we admit the distributional kernels $K$ (as in Proposition \ref{PRO}), however, we will assume that 
$\frak{I}_{c,\tau}(\lambda)$ is a usual scalar continuous function of variables  $c,\tau, \lambda$.  

In view of the above mentioned references, model  (\ref{i1}) seems to be among the most studied equations of  population dynamics. It can be also viewed as a natural extension of the nonlocal KPP-Fisher  equation  \cite{AB,BNPR,FZ,GT,GF,HK,wz} which is obtained from $(\ref{i1})$ by letting $\gamma=0$. Positive smooth solutions $u(t,x)=\phi(x+ct)$ of  (\ref{i1}) satisfying the boundary conditions $\phi(-\infty)=0, \ \phi(+\infty) =1$  (and usually called wavefront solutions, or simply wavefronts)  are key elements for understanding the whole 
evolution process governed by the food-limited equation. The parameter $c$ is called the wave speed, it is a well known fact that $c\geq 2$ (cf. Subsection \ref{S22} below).  Starting from the pioneering works by Gourley \cite{GG} and Gourley and Chaplain \cite{GC}, 
the existence and monotonicity properties of wavefronts  for the FL model have been  the  object of investigation in  various papers, e.g. see  \cite{OW,WL,Pr}.  From these works, we have the following consolidated existence result. 
\begin{proposition} \label{PRO} Suppose that the kernel $K$ has one of two forms
\begin{equation}\label{K1a}
K(s,y,\tau)= \delta(y)G_1(s,\tau),  \quad  K(s,y,\tau)= \frac{e^{-y^2/4s}}{\sqrt{4\pi s}}G_2(s,\tau),
\end{equation}
where either  $G_1(s,\tau)=\delta(s-\tau)$ (local in space, discrete delay case \cite{GG,WL}),  or $G_2(s,\tau)=\delta(s-\tau)$ (nonlocal in space, discrete delay case \cite{OW}),  or $G_1(s,\tau) =(s/\tau^2) e^{-s/\tau}$ (local in space, strong generic delay case \cite{OW,WL}), or 
$G_2(s,\tau) =(s/\tau^2) e^{-s/\tau}$ (nonlocal in space, strong generic delay case \cite{OW,WL,Pr}), or $G_2(s,\tau) =(1/\tau) e^{-s/\tau}$ (nonlocal in space, weak generic delay case \cite{GC}). 

Then for each $c \geq 2$ there exists $\tau(c)$ such that equation (\ref{i1}) with $\tau \in (0, \tau(c)]$ has at least one wavefront propagating with the speed $c$. 

Moreover, in the case when  $G_2(s,\tau)=\delta(s-\tau)$ and $\tau < (1+\gamma)3/2$, there is $c(\tau,\gamma)\geq 2$ such that (\ref{i1})  has at least one wavefront for each  speed $c \geq c(\tau, \gamma)$ \cite{OW}. 
\end{proposition}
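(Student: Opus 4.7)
The natural starting point is the wave profile equation obtained by inserting $u(t,x)=\phi(z)$ with $z=x+ct$ into (\ref{i1}):
\begin{equation*}
\phi''(z)-c\phi'(z)+\phi(z)\,\frac{1-(K*\phi)(z)}{1+\gamma(K*\phi)(z)}=0,\qquad \phi(-\infty)=0,\ \phi(+\infty)=1,
\end{equation*}
where $(K*\phi)(z)=\int_{\R_+\times\R}K(s,y,\tau)\phi(z-cs-y)\,ds\,dy$. Since $(K*\phi)$ vanishes with $\phi$, the linearization at zero is $\phi''-c\phi'+\phi=0$, whose characteristic equation $\lambda^2-c\lambda+1=0$ admits real positive roots $\lambda_\pm=(c\pm\sqrt{c^2-4})/2$ exactly when $c\geq 2$, which accounts for the minimal speed and identifies $\lambda_-$ as the admissible decay rate at $-\infty$.

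With this in hand I would build a Kan-on--Hosono pair of sub- and super-solutions: $\bar\phi(z)=\min\{1,e^{\lambda_- z}\}$ and $\underline\phi(z)=\max\{0,e^{\lambda_- z}-qe^{\eta z}\}$, choosing $\eta\in(\lambda_-,\min\{\lambda_+,2\lambda_-\})$ and $q>1$ sufficiently large. The required differential inequalities reduce, in each of the five cases of (\ref{K1a}), to explicit estimates on $\frak{I}_{c,\tau}(\lambda)$ at $\lambda=\lambda_-$ and $\lambda=\eta$. Since each of these kernels furnishes a closed-form $\frak{I}_{c,\tau}(\lambda)$ that is continuous in $\tau$ and tends to $1$ as $\tau\to 0^+$, the estimates hold for all $\tau\in(0,\tau(c)]$ with some $\tau(c)>0$. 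Existence of a profile trapped between $\underline\phi$ and $\bar\phi$ is then obtained by adding $M\phi$ to both sides for $M>0$ large, rewriting the equation as a fixed-point problem $\phi=\mathcal{T}\phi$ for a completely continuous operator on an exponentially weighted space of continuous functions, and invoking Schauder's theorem.

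The chief obstacle is the appearance of $(K*\phi)$ in the denominator of the reaction, which breaks the quasi-monotonicity required by classical monotone iteration. I would circumvent this by first restricting attention to the invariant order interval $\{0\leq\phi\leq 1\}$, on which the reaction is Lipschitz and, after the shift by $M\phi$, yields a monotone operator falling within the Ma--Wu--Zou framework for nonlocal monostable equations; monotonicity of the resulting wavefront is recovered a posteriori via a sliding argument applied to any fixed point confined between $\underline\phi$ and $\bar\phi$. For the final assertion, in the case $G_2(s,\tau)=\delta(s-\tau)$ an explicit computation gives $\frak{I}_{c,\tau}(\lambda)=\exp(\lambda^2\tau-\lambda c\tau)$, and the condition $\tau<3(1+\gamma)/2$ is exactly what is needed to solve the inequality governing $\eta$ for every $c$ exceeding a threshold $c(\tau,\gamma)\geq 2$; the remainder of the argument proceeds unchanged.
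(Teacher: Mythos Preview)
The paper does not prove this proposition: it is explicitly presented as a ``consolidated existence result'' assembled from the cited prior works \cite{GG,GC,OW,WL,Pr}, and no argument is given in the body of the paper. The paper's own contributions begin with Theorem~\ref{T11}. So there is nothing to compare your attempt against, and strictly speaking no proof is expected of you here.

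That said, a couple of remarks on your sketch. The mechanism you describe --- Wu--Zou style upper/lower solutions combined with monotone iteration --- is indeed the method of \cite{GG,WL}, but the paper stresses that the different cases of the proposition were originally handled by \emph{different} techniques: monotone iteration in \cite{GG,WL}, linear chain plus geometric singular perturbation in \cite{GC,Pr}, and Hale--Lin / Lyapunov--Schmidt perturbation in \cite{OW}. In particular, the final assertion (existence for all $c\ge c(\tau,\gamma)$ when $G_2(s,\tau)=\delta(s-\tau)$ and $\tau<3(1+\gamma)/2$) comes from the Hale--Lin approach of \cite{OW}, perturbing off the $c=\infty$ limit; the bound $3(1+\gamma)/2$ arises from a Fredholm/spectral condition there, and your claim that it is ``exactly what is needed to solve the inequality governing $\eta$'' in a sub/super-solution scheme is not substantiated and would require a separate computation.

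There is also a subtlety you gloss over. The reaction $F(u,v)=u(1-v)/(1+\gamma v)$ is decreasing in $v$, so the standard ``add $M\phi$'' trick does not by itself yield a monotone integral operator in the nonlocal variable. The device actually used (see Section~2 of the present paper, and implicitly in \cite{WL}) is the specific rewriting $\phi''-c\phi'+\phi=(\mathcal F\phi)(t)$ with $(\mathcal F\phi)(t)=\phi(t)\,(1+\gamma)(N_c*\phi)/(1+\gamma N_c*\phi)$, which \emph{is} monotone in $\phi$ on $[0,1]$. Your generic Schauder/sliding outline would need this or an equivalent observation to go through.
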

Several relevant remarks are in order. First, in view of  approaches used in the  cited works, 
the numbers $\tau(c)$ and $1/ c(\tau, \gamma)$ in Proposition \ref{PRO} have to be {\it sufficiently} small.  On the other hand, 
derivation of explicit lower estimates for them  seems to be rather problematic. At the same time, available upper estimations of $\tau(c)$ do not exclude that it may happen that  $\tau(c) \to 0$ as $c\to \infty$, see \cite{GG,WL} or related argumentation  in \cite{HT}.  Certainly, all this limits the applicability of Proposition \ref{PRO}. Second, the monotonicity of obtained wave profiles $\phi(s)$ was proved only for the particular cases considered in \cite{GG,WL}. The studies \cite{GG,GC, OW} present numerical simulations showing that $\phi(s)$ is monotone for small $\tau$ and that it can oscillate around the equilibrium $1$ if $\tau$ is sufficiently large.  And third, 
important and difficult problem of the uniqueness of wavefronts for the FL model  was not discussed in the mentioned articles. 

In  the present work,  inspired by  recent significant advances in the studies of nonlocal KPP-Fisher equation  \cite{AC,BNPR,DN,FZ,GT,HK,HT,NP} (i.e. of equation (\ref{i1}) with  $\gamma=0$), 
we are going to clarify all  three aforementioned aspects concerning the FL model.  We will also discuss remarkable differences existing between the cases $\gamma =0$ and 
$\gamma >0$. We explain why, even in more simple case of monotone wavefronts for (\ref{i1}), it seems impossible to obtain a concise criterion, similar to the Fang and Zhao 
criterion \cite{FZ}, of their existence when  $\gamma \gg 1$.

Previously, three main approaches were used to prove Proposition \ref{PRO}: a) the Zou and Wu monotone iterations method \cite{wz}, extended for nonlocal systems in \cite{WLR}. It allows to consider rather general kernels $K$ and prove the monotonicity of waves,  see \cite{GG,WL}; b) the linear chain technique combined with Fenichel's invariant manifold theory. Only special kernels are admitted by this approach,   see \cite{GC,Pr}; c) the Hale-Lin perturbation techniques  \cite{FH,HL,HK} based on the use of the Lyapunov-Schmidt reduction of the profile equation  in appropriate infinite-dimensional spaces.  In this case the non-perturbed equation (when $\tau=0$ or $1/c=0$) has a wavefront and it is shown that it can be extended continuously for small $\tau>0$ or large $c \gg 2$. As in a), this method can also be applied to rather general forms of $K$, see \cite{OW}.  

In our analysis of the wavefront existence problem for the FL model, we are using an appropriate modification of the Zou and Wu monotone iterations algorithm from \cite{wz}. The recent works \cite{FZ,GT,HT,TPT} have shown high efficiency of  this method in the studies of delayed \cite{GT,TPT}, nonlocal \cite{FZ} and neutral \cite{HT} versions of the KPP-Fisher equation as well as of the nonlocal diffusive equation  of the Mackey-Glass type \cite{TPT}. In this regard, this article provides a natural extension, for $\gamma >0$, of the findings in \cite{FZ,GT}  containing them as very particular cases. 

Clearly,  the existence of  classical travelling wave solution $u(x,t)=\phi(x+ct)$ amounts to the existence of positive solution $\phi(t)$ to the following boundary value problem
\begin{equation}\label{f1}
 \phi''(t) - c\phi'(t) +\phi(t)\left(\frac{1-(N_c *\phi)(t)}{1+\gamma (N_c*\phi)(t)}\right)=0, \quad  \phi(-\infty) = 0 , \;\; \phi( +\infty)=1, 
\end{equation}
where
$$
N_c(v,\tau): = \int_0^{+\infty}K(s,v-cs,\tau)ds, \quad \mbox{so that} \  \int_\R N_c(v,\tau)e^{-\lambda v}dv = \frak{I}_{c,\tau}(\lambda). 
$$
Note that each non-constant solution $\phi(t) \in [0,1], \ t \in \R,$ of (\ref{f1})  satisfies   $\phi'(t) >0$ and $\phi(t) \in (0,1)$ for all $t \in\R$, see Lemmas \ref{mono} and \ref{USAI} below. 

After linearizing (\ref{f1}) around the positive equilibrium, we get the characteristic function $\chi_+$ determining  the asymptotic behavior of wavefronts at $+\infty$, 
$$
\chi_+(z,c,\tau) = z^2-cz - \frac{1}{1+\gamma} \int_\R N_c(v,\tau)e^{-z v}dv. 
$$
Now we are in position to state the first main result of this work. 
\begin{thm}\label{T11}  If, for some $c' \geq 2$, $\tau>0$, the  equation $\chi_+(z,c',\tau)=0$ has a negative root $z'$  satisfying the inequality 
{$\gamma(z'^2 -cz') \leq 1$ }  then 
 equation (\ref{i1}) has a monotone  wavefront $u(x,t)=\phi(x+c't)$, $\phi'(s) >0, \ s \in \R$. Conversely,  if  equation (\ref{i1}) has a monotone wavefront propagating with some 
 speed $c'$ then $c' \geq 2$ and there exists a negative real number $z'$ such that $\chi_+(z',c',\tau)=0$. \end{thm}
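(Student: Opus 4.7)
The plan is to prove the forward direction via a Zou--Wu style monotone iteration scheme adapted to the food-limited profile equation (following the strategy applied to the nonlocal KPP-Fisher case in \cite{FZ,GT,TPT}), and the converse direction by an asymptotic analysis of $1-\phi$ at $+\infty$.

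For the forward direction, I would first rewrite (\ref{f1}) as $L\phi = H[\phi]$, where $L = D^2 - c'D - \mu I$ for a sufficiently large $\mu>0$ and
\[
H[\phi](t) = \mu\phi(t) - \phi(t)\frac{1-(N_{c'}*\phi)(t)}{1+\gamma(N_{c'}*\phi)(t)}.
\]
For $\mu$ large enough, the map $H$ is monotone non-decreasing in $\phi$ on the order interval $[0,1]$, so the profile equation becomes the fixed-point problem $\phi = T\phi := L^{-1}H[\phi]$, with $L^{-1}$ acting as convolution against an explicit positive Green's kernel. Next I would construct a pair of barriers $\underline\phi \le \bar\phi$. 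The upper solution $\bar\phi(t)=\min\{1,e^{\lambda t}\}$, with $\lambda>0$ the smaller positive root of $z^2-c'z+1=0$ (which exists by $c'\ge 2$), is standard and controls the decay of $\phi$ at $-\infty$. Iterating $T$ downward from $\bar\phi$ then produces a decreasing sequence converging to a fixed point $\phi^* \in [\underline\phi, \bar\phi]$ that satisfies (\ref{f1}); its strict monotonicity $\phi^{*\prime}(s)>0$ follows from Lemma \ref{mono}.

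The main obstacle is the construction and verification of the lower solution. Using the hypothesised negative root $z'$ of $\chi_+(\cdot,c',\tau)$, I would take $\underline\phi(t)=\max\{0,\, 1-e^{z't}-Ke^{qz't}\}$ with $q>1$ chosen close to $1$ and $K>0$ large, so that the profile matches at $+\infty$ the expected exponential decay of $1-\phi$ at rate $z'$. Checking the differential inequality $L\underline\phi\le H[\underline\phi]$ on the region where $\underline\phi>0$, the terms of order $e^{z't}$ cancel by virtue of $\chi_+(z',c',\tau)=0$, while the coefficient of the next-order contribution $e^{qz't}$ simplifies, after an elementary Taylor expansion of $(1-u)/(1+\gamma u)$ around $u=1$, to an expression whose sign is controlled precisely by the hypothesis $\gamma(z'^2-c'z')\le 1$. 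This reflects the essentially nonlinear character of the denominator: in the pure KPP case $\gamma=0$ the expansion is linear and the condition trivialises, but for $\gamma>0$ an additional quadratic correction appears and can dominate the leading cancellation unless the hypothesis is satisfied.

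For the converse direction, assume that (\ref{i1}) admits a monotone wavefront $\phi$ of speed $c'$. The bound $c'\ge 2$ is classical: linearising (\ref{f1}) at $\phi=0$ yields $\phi''-c'\phi'+\phi=0$, and if $c'<2$ every non-trivial solution vanishing at $-\infty$ must oscillate, contradicting monotonicity. For the existence of a negative real zero of $\chi_+(\cdot,c',\tau)$, set $\psi:=1-\phi$; then $\psi\searrow 0$ at $+\infty$ and satisfies a linear asymptotically autonomous equation with symbol $\chi_+$. A standard Ikehara--Tauberian argument, as developed in \cite{AC,HT}, then forces $\psi$ to decay at a rate equal to some negative real zero of $\chi_+(\cdot,c',\tau)$, completing the proof.
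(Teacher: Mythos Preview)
Your forward-direction scheme has a genuine gap at its first step. The claim that $H[\phi]$ becomes monotone in $\phi$ once $\mu$ is large is false for this nonlocal reaction. Writing $f(u,v)=u(1-v)/(1+\gamma v)$ with $v=N_c*\phi$, one has $\partial_v f<0$; so if $\phi_1\le\phi_2$ coincide at a point $t_0$ but $\phi_1<\phi_2$ elsewhere, then $(N_c*\phi_1)(t_0)<(N_c*\phi_2)(t_0)$ drives $H$ the wrong way at $t_0$, while the $\mu\phi$ term contributes nothing there. No finite $\mu$ can control the nonlocal variable pointwise. The paper's remedy is a different rewriting, equation (\ref{f1e}): $\phi''-c\phi'+\phi=(\mathcal F\phi)(t)$ with $(\mathcal F\phi)(t)=\phi(t)\,(1+\gamma)(N_c*\phi)(t)/(1+\gamma(N_c*\phi)(t))$. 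Here both the outer factor $\phi$ and the map $v\mapsto(1+\gamma)v/(1+\gamma v)$ are nondecreasing, so $\mathcal F$ is monotone automatically, and the one-sided nonnegative Green kernel $A(s)$ of $D^2-cD+1$ (available precisely because $c\ge 2$) gives the integral operator $\mathcal A$ in (\ref{fie}).

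Your barriers also need replacement. The super-solution $\bar\phi=\min\{1,e^{\lambda t}\}$ fails: on $\{\bar\phi=1\}$ the required inequality forces $N_c*\bar\phi\ge 1$, which is false whenever $\operatorname{supp}N_c\cap(0,\infty)\ne\emptyset$. The paper instead takes $\phi_+$ to be the monotone wavefront of the \emph{modified} equation (\ref{f1m}), whose existence for every $c\ge 2>2/\sqrt{1+\gamma}$ follows from \cite{FZ}; the inequality (\ref{f1u}) then gives $\mathcal A\phi_+\le\phi_+$ at once. For the sub-solution the paper uses the $C^1$-glued function $\phi_-=1-e^{z't}$ on $[\zeta,\infty)$ and $a\,e^{z_1 t}$ on $(-\infty,\zeta]$. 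The hypothesis $\gamma(z'^2-cz')\le 1$ enters not via a Taylor expansion of $(1-u)/(1+\gamma u)$ but directly: it guarantees the lower bound (\ref{zn}) on $1+\gamma N_c*\phi_-$ and yields the final sign in Lemma \ref{US}. The borderline cases ($c=2$, $z'$ multiple, or equality in $\gamma(z'^2-cz')\le1$) require a separate limiting argument.

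Your converse sketch is in the right spirit, though the paper argues more directly: $c'\ge 2$ comes from Levinson asymptotic integration applied to (\ref{R}), and the negative zero of $\chi_+$ is obtained by a bootstrap on $\sigma_*:=\liminf_{t\to\infty}t^{-1}\ln(1-\phi(t))$, showing that if $\chi_+$ had no negative root one could iterate the integral equation to force $\sigma_*>\sigma_*$.
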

To compare this result with Proposition  \ref{PRO}, let consider  the second kernel in (\ref{K1a}).  We have 
\begin{equation}\label{K1}
\int_\R N_c(v,\tau)e^{-\lambda v}dv = \int_0^{+\infty}G_2(s,\tau)e^{(\lambda^2-\lambda c)s}ds \in \R_+, 
\end{equation}
so that 
$$
\chi_+(z,c,\tau) =\chi(z^2-cz,\tau), \quad \mbox{where} \ \chi(w,\tau)= w -  \frac{1}{1+\gamma}\int_0^{+\infty}G_2(s,\tau)e^{ws}ds. 
$$
Here is an immediate consequence of the  above computation and Theorem \ref{T11}.
\begin{corollary}\label{C1} Let the kernel be as in (\ref{K1a}) with $G_2(s,\tau)$ and $\gamma >0$. Then, if equation  (\ref{i1})  
has a monotone wavefront propagating with the speed  $c'$, $\chi$ has a  positive zero $w'$.
 Conversely, suppose that $\chi$ has a zero  $w' \in (0, 1/\gamma)$. 
Then for each $c' \geq 2$ equation  (\ref{i1})  has a monotone wavefront propagating with the speed  $c'$.  
\end{corollary}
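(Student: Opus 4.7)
The plan is to read off the corollary directly from Theorem \ref{T11} together with the factorization $\chi_+(z,c,\tau)=\chi(z^2-cz,\tau)$ that the Gaussian spatial kernel provides. Indeed, the computation in (\ref{K1}) reduces the transcendental equation $\chi_+(z,c,\tau)=0$ to $\chi(z^2-cz,\tau)=0$, so the question of existence of a suitable negative root $z'$ of $\chi_+$ becomes the question of existence of a suitable positive value $w'=z'^2-c'z'$ with $\chi(w',\tau)=0$. The whole proof is then a matter of tracking this change of variable in both directions.

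For the necessity (forward) direction, I would assume that (\ref{i1}) admits a monotone wavefront with speed $c'$. Theorem \ref{T11} gives that $c'\ge 2$ and that there is a negative real $z'$ with $\chi_+(z',c',\tau)=0$. Setting $w':=z'^2-c'z'$, the factorization yields $\chi(w',\tau)=0$, and since $z'<0$ and $c'\ge 2>0$ we have $w'=z'^2-c'z'>0$, so $w'$ is a positive zero of $\chi$.

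For the sufficiency (converse) direction, fix any $c'\ge 2$ and assume $\chi(w',\tau)=0$ with $w'\in(0,1/\gamma)$. I would then pick the negative root of the quadratic $z^2-c'z-w'=0$, namely
\[
z'=\frac{c'-\sqrt{(c')^2+4w'}}{2}<0,
\]
which is real because $w'>0$, and which satisfies $z'^2-c'z'=w'$. Hence $\chi_+(z',c',\tau)=\chi(w',\tau)=0$. Moreover $\gamma(z'^2-c'z')=\gamma w'<1$, so the structural hypothesis $\gamma(z'^2-c'z')\le 1$ of Theorem \ref{T11} is fulfilled, and the existence of a monotone wavefront of speed $c'$ follows from that theorem.

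There is essentially no obstacle here beyond the bookkeeping of the substitution $w=z^2-cz$; the mild point to check is only that the negative branch of the quadratic is the one that pairs with the positive zero $w'$ of $\chi$, and that the open bound $w'<1/\gamma$ is exactly what is needed to meet the closed inequality $\gamma(z'^2-c'z')\le 1$ required by Theorem \ref{T11}. Both are immediate from the signs above.
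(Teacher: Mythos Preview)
Your proof is correct and matches the paper's intent: the paper states that the corollary is ``an immediate consequence of the above computation and Theorem~\ref{T11}'', and your argument spells out exactly this immediate consequence via the substitution $w=z^2-cz$ in both directions. There is nothing to add.
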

\begin{example}\label{ex1} As in \cite{GC}, consider weak generic delay kernel  $G_2(s,\tau) =(1/\tau) e^{-s/\tau}$.  Then  
$$
\chi(w,\tau)= w + \frac{1}{(1+\gamma)(w\tau-1)}, \quad w < 1/\tau,
$$
and after invoking  Corollary \ref{C1}, it is easy to find that the inequality $\tau \leq (1+\gamma)/4$ is a necessary condition for the existence of monotone wavefronts. 
By the same  corollary, a straightforward analysis shows that  monotone wavefronts exist when 
 \begin{equation}\label{ex1c}
 \tau \leq \left\{
\begin{array}{ccc}
(1+\gamma)/4 & ,& \gamma \in (0,1],\\
\gamma/(1+\gamma) & , & \gamma \geq 1.  
\end{array} \right. 
\end{equation}
In difference with the result provided by Proposition \ref{PRO}, the upper estimate for $\tau$ in (\ref{ex1c})  is explicit and does not vanish as $c \to +\infty$. Moreover, for values of  $\gamma \in (0,1]$, the inequality $\tau \leq (1+\gamma)/4$  gives a sharp criterion for the existence of monotone wavefronts.    To understand what can happen for $\gamma >1$, following \cite{GC}, we  applied the linear chain technique rewriting equation  (\ref{i1})  as the system of two coupled reaction-diffusion equations
\begin{equation}\label{s1}
u_t=du_{xx} +u\left(\frac{1-v}{1+\gamma v}\right), \quad v_t=dv_{xx} +\frac{1}{\tau}(u-v). 
\end{equation}
Here, similarly to \cite{GC}, we introduced a diffusivity parameter $d$ to have a  greater control of the propagation speed (in any case, notice that condition (\ref{ex1c}) neither depends on the diffusivity $d$ nor on the speed $c$). Next, we realized numerical simulations with MATLAB taking $\gamma =40, \ \tau = 10$, $d =50$ and considering (\ref{s1}) on the interval $[0,1800]$ with homogeneous Neumann boundary conditions and initial conditions 
$$u(0,x) = v(0,x) = \left\{
\begin{array}{ccc}
0 & ,& t \leq 1500,\\
1& , & t > 1500.
\end{array} \right. $$
With the above indicated values, we have $10=\tau \leq (1+\gamma)/4 =41/4$, however, $\gamma(dz'^2 -cz')= \gamma w' = 2(1-1/\sqrt{41})>1$. As Figure  \ref{non} shows, 
the solution $u(t,x)$ of the problem converges to a wavefront propagating with the minimal speed $c = 10\sqrt{2}$. 
This wavefront is clearly non-monotone, with  its maximal value being bigger than $1$.  Nevertheless, as the analysis in \cite{IGT} suggests, the wavefront   neither is oscillating  around the positive equilibrium (actually, all zeros of $\chi_+(z,10\sqrt{2},10)$ are real). 
\begin{figure}[ht!]
\centering\fbox{{\includegraphics[scale=0.38]{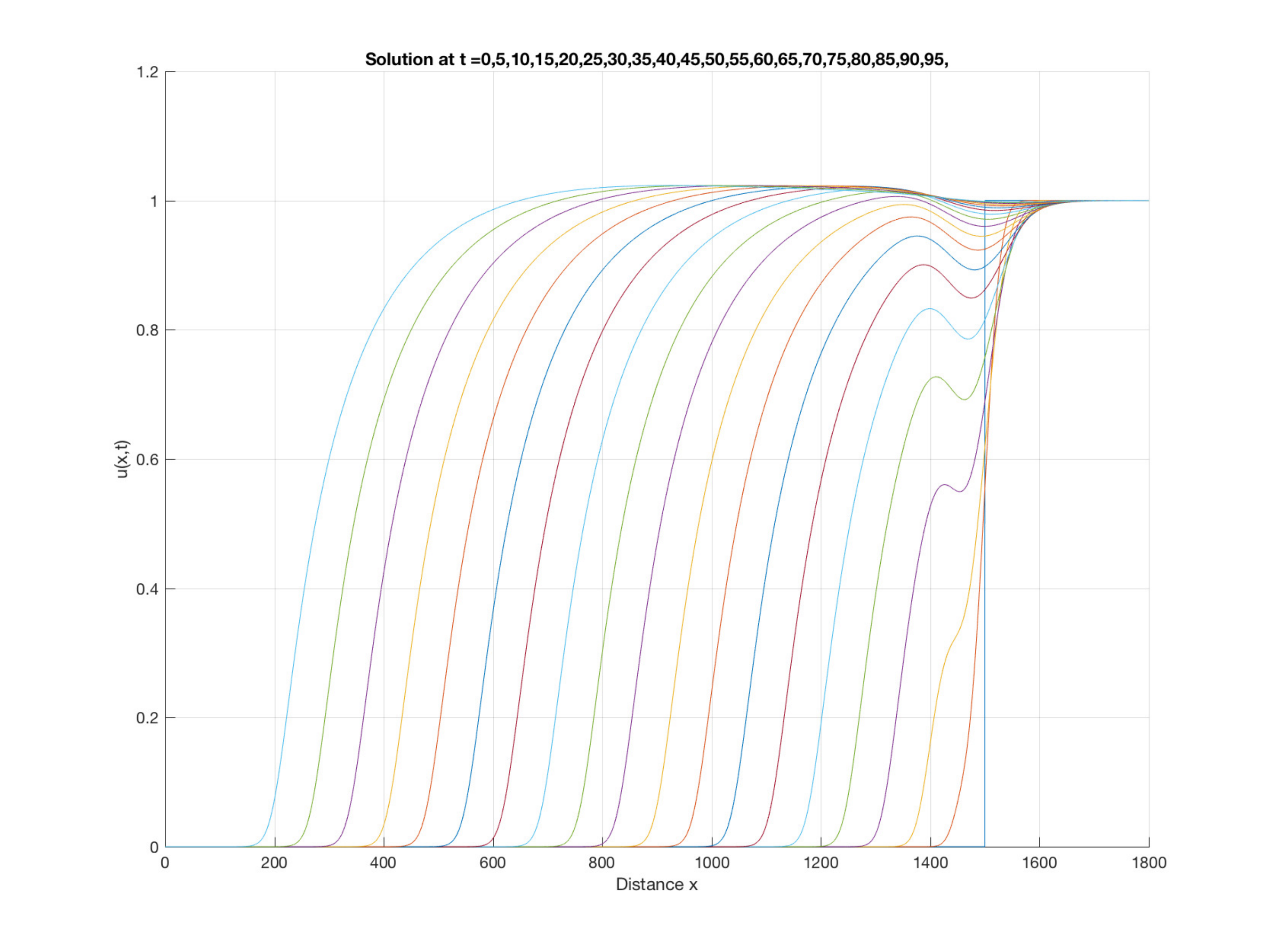}}}
\caption{Development of a minimal non-monotone  non-oscillating wavefront in the nonlocal FL model with the weak generic delay kernel.}
\label{non}
\end{figure}
It seems that such a dynamical behavior (the existence of a non-monotone but eventually monotone wavefront in  the FL model) was not observed in the previous numerical experiments, cf. \cite{GG,GC,OW}.  From the analysis in \cite{IGT} we see that the reason of this phenomenon can be explained by the fact that the nonlinear function
\begin{equation}\label{F}
F(u,v) := u\left(\frac{1-v}{1+\gamma v}\right)
\end{equation}
with $\gamma >0$
does not satisfy the sub-tangency condition at the positive steady state $$F(u,v) \leq F_u(1,1)(u-1)+  F_v(1,1)(v-1) = \frac{1-v}{1+\gamma}, \ u,v \in (0,1).$$ 
This fact also indicates that the problem of determining  necessary and sufficient conditions for the existence  of monotone wavefronts in (\ref{i1}) with $\gamma >0$ might not have 
an explicit solution in terms of the parameters $\tau, \gamma, c$. Here we have a full analogy with the very difficult problem of determining speed of propagation of the pushed wavefronts, see \cite{BD} and references therein. 
\end{example}  
\begin{example}\label{ex2} As in  \cite{OW,WL,Pr}, consider the strong generic delay kernel  
$G_2(s,\tau) =(t/\tau^2) e^{-s/\tau}$. Then $$
\chi(w,\tau)= w - \frac{1}{(1+\gamma)(w\tau-1)^2}, \quad w < 1/\tau,
$$
and the sufficient condition of Corollary \ref{C1} for the existence of monotone wavefronts takes the form 
  $$ \tau \leq \left\{
\begin{array}{ccc}
4(1+\gamma)/27 & ,& \gamma \in (0,4/5],\\
\gamma(1-\sqrt{\frac{\gamma}{1+\gamma}}) & , & \gamma \geq 4/5.  
\end{array} \right. $$ 
Moreover, the inequality $\tau \leq 4(1+\gamma)/27$ is a necessary condition for the existence of monotone wavefronts.  All other conclusions and discussion of Example \ref{ex1} are also apposite  to  this particular case. 
\end{example}
\begin{example}\label{C2} Finally, as in \cite{GG}, we consider the FL model  with a single discrete delay and without nonlocal interaction, i.e. when the kernel in (\ref{K1a}) is chosen with  $G_1(s,\tau)= \delta(t-\tau)$ and  $\gamma >0$. In several aspects, this case appears to be more difficult than the situations considered in Examples \ref{ex1}, \ref{ex2}.  As we have already 
mentioned, it does not allow the use of the linear chain technique.  It is immediate  to find that  
 $$
\chi_+(z,c,\tau) = z^2-cz - \frac{e^{-z ch}}{1+\gamma}. 
$$
Applying Theorem \ref{T11} and after some computation,  we see that  a monotone wavefront exists for  each $c \geq 2$ if 
$$0\leq \tau\leq \gamma\ln\frac{1+\gamma}{\gamma}.
$$
But even if the latter inequality is not satisfied, the monotone  wavefronts still exist for the speeds 
$$
2 \leq c \leq \frac{\ln\frac{1+\gamma}{\gamma}}{\tau \sqrt{\frac 1 \gamma - \frac 1 \tau \ln\frac{1+\gamma}{\gamma}}}. 
$$
In Figure \ref{stab1}, the regions of parameters $(\tau,c) \in [0,+\infty)\times [2+\infty)$ bounded by the blue lines and the coordinate axes correspond to some subdomains of existence 
of monotone wavefronts (determined by the sufficient condition of Corollary \ref{C1}). The domains  bounded by the red lines and the coordinate axes correspond to the domains of the existence 
of eventually monotone wavefronts (necessary condition of Corollary \ref{C1}). 
\begin{figure}[ht!]
\centering\fbox{ \hspace{-5mm} {\includegraphics[scale=0.24]{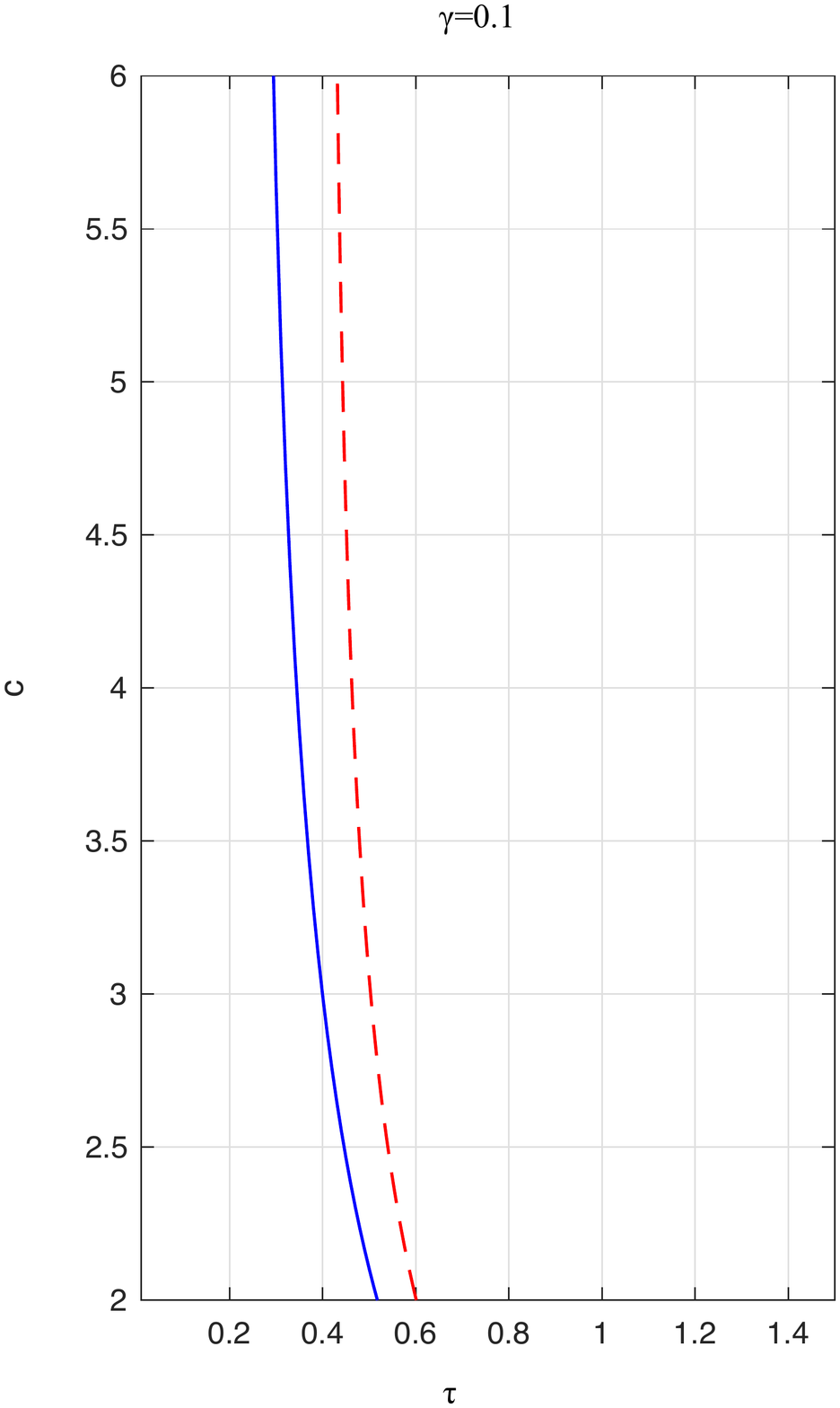}} \hspace{-15mm} {\includegraphics[scale=0.24]{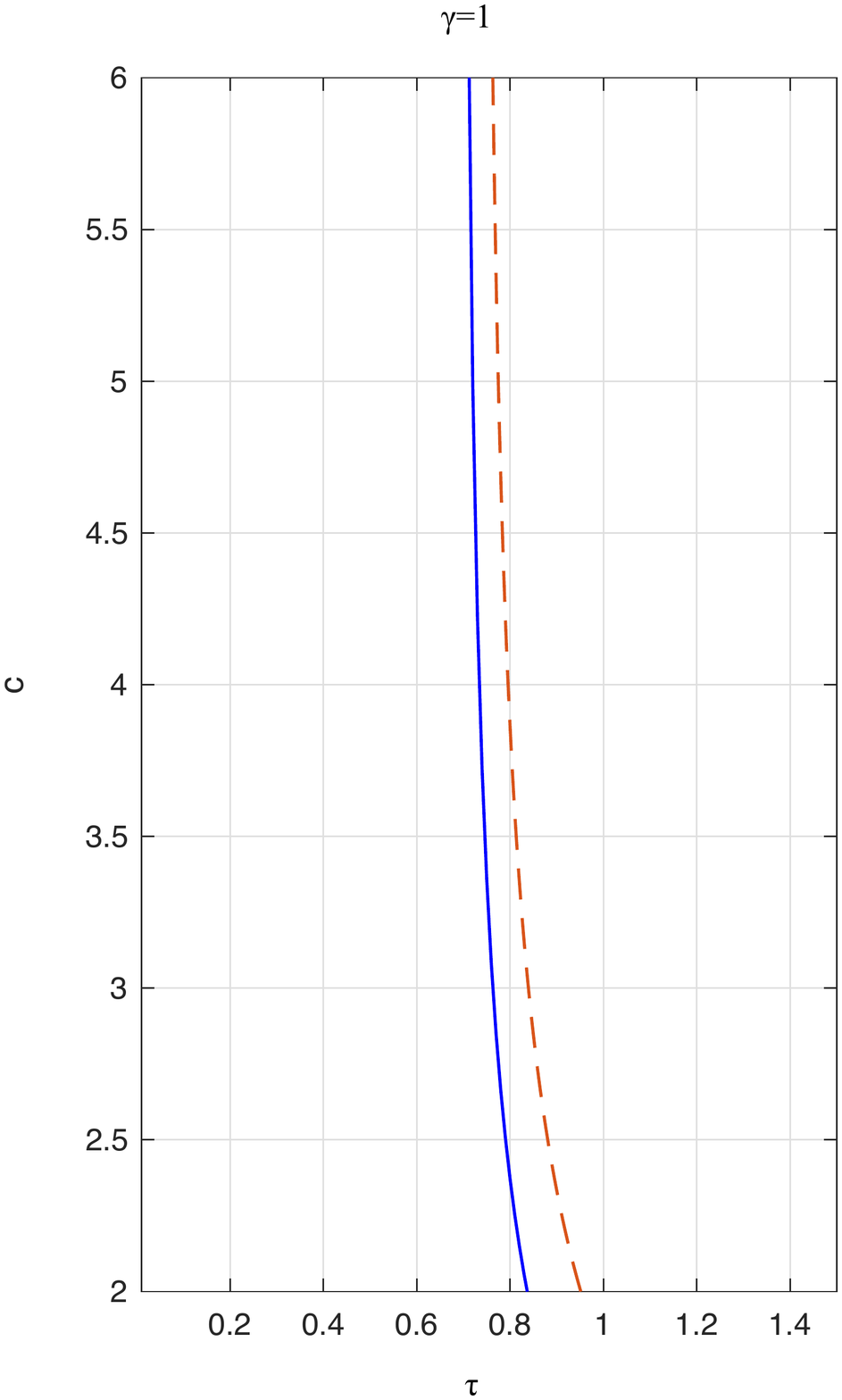}} \hspace{-15mm}  {\includegraphics[scale=0.24]{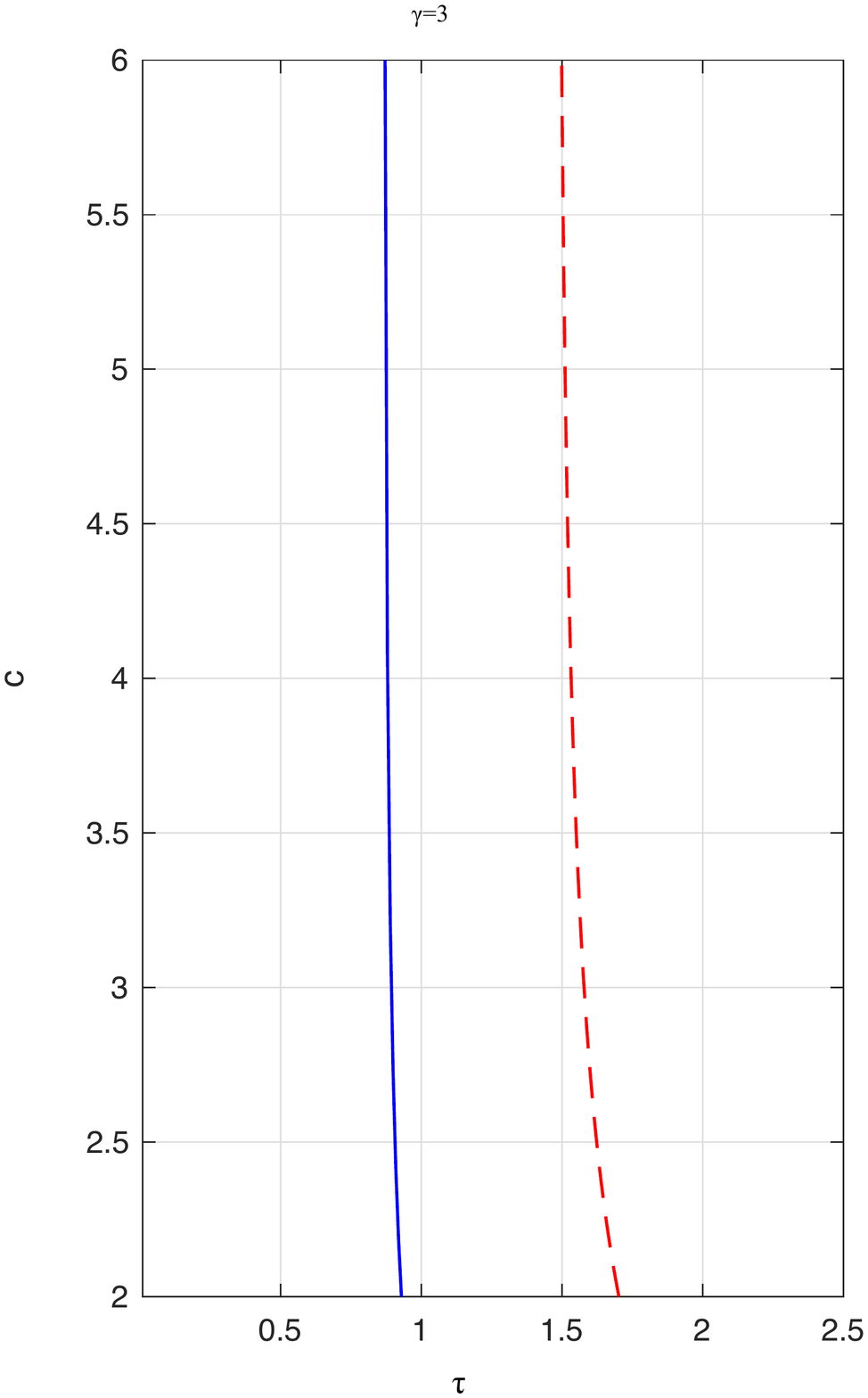}}}
\caption{The domains bounded by the blue (respectively, red) lines and the coordinate axes correspond to the sufficient (respectively, necessary) conditions for the existence of monotone waves.  Here we take $\gamma \in \{0.1, 1, 3\}$. }
\label{stab1}
\end{figure}
\end{example}
The next main result of this paper answers  the  question about the uniqueness of monotone waves.
\begin{thm}\label{T12}  Suppose that $\phi(t), \psi(t)$ are two monotone wavefronts to
 equation (\ref{i1}) propagating with  the same speed.  Then there exists a  real number $t'$ such that $\phi(t)\equiv \psi(t+t')$.  \end{thm}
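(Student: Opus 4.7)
The plan is to adapt the sliding--translation method developed by Solar et al.\ to the nonlocal food-limited setting. The strategy proceeds in three stages: (i) describe the asymptotic behaviour of both monotone profiles at $-\infty$ precisely enough to normalise them by translation; (ii) introduce the minimal non-negative shift $\sigma^\ast$ for which $\phi(\cdot+\sigma)\geq \psi$ on $\R$; (iii) force $\sigma^\ast=0$, and then by a symmetric argument conclude that $\phi$ and $\psi$ coincide up to a single translation.

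For stage (i), linearising the profile equation (\ref{f1}) around $\phi=0$ yields the characteristic function $\chi_-(z,c',\tau):=z^2-c'z-\frak{I}_{c',\tau}(-z)$, which admits a smallest positive real root $\mu=\mu(c')$ thanks to the assumptions on $\frak{I}_{c,\tau}$. Rewriting the profile equation as a convolution integral against the Green kernel of $\partial^2-c'\partial-(\mu^2-c'\mu)$ and invoking a standard Ikehara--Tauberian argument in the spirit of \cite{FZ,GT,HT}, each monotone wavefront satisfies $\phi(t)=A_\phi\,e^{\mu t}(1+o(1))$ as $t\to-\infty$ with $A_\phi>0$, and similarly $\psi(t)=A_\psi\,e^{\mu t}(1+o(1))$. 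Replacing $\psi$ by a translate, we may assume $A_\phi=A_\psi$.

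For stage (ii), set
$$
\sigma^\ast:=\inf\{\sigma\geq 0:\ \phi(t+\sigma)\geq \psi(t)\ \text{for all } t\in\R\}.
$$
The common asymptotic behaviour at $-\infty$, the conditions $\phi(+\infty)=\psi(+\infty)=1$, strict monotonicity (Lemma \ref{mono}), and the uniform bounds on both profiles make $\sigma^\ast$ well defined and finite. Assume, for contradiction, that $\sigma^\ast>0$, and set $w(t):=\phi(t+\sigma^\ast)-\psi(t)\geq 0$. Subtracting the profile equations and applying the mean value theorem to the nonlinearity $F(u,v)=u(1-v)/(1+\gamma v)$, the crucial sign
$$
F_v(u,v)=-u\,\frac{1+\gamma}{(1+\gamma v)^2}<0
$$
gives a linear equation of the form
$$
w''(t)-c'\,w'(t)+a(t)\,w(t)=b(t)\,(N_{c'}\ast w)(t),
$$
where $a$ is bounded continuous and $b(t)\ge 0$. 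Because $N_{c'},\,w\geq 0$, the right-hand side is non-negative, so $w$ is a non-negative supersolution of the purely local operator $Lw:=w''-c'w'+a(t)w$.

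For stage (iii), the contradiction is obtained in two subcases. If $w(t_0)=0$ at some $t_0\in\R$, the standard reduction $w=e^{\lambda t}\tilde w$ with $\lambda$ chosen so that $\lambda^2-c'\lambda+a(t)\leq 0$ (possible since $c'\geq 2$ and $0<a(t)\leq 1$) transforms $Lw\geq 0$ into an inequality for $\tilde w$ with non-positive zero-order coefficient, to which the Hopf--Oleinik strong maximum principle applies and yields $\tilde w\equiv 0$, hence $w\equiv 0$; this forces $\phi(t+\sigma^\ast)\equiv \psi(t)$, contradicting the normalisation of stage (i) when $\sigma^\ast>0$. If instead $w>0$ pointwise, strict positivity on any compact set persists under a small decrease of $\sigma$; combined with the asymptotic expansion of stage (i) at $-\infty$ and a parallel analysis at $+\infty$ via the characteristic function $\chi_+$ of Theorem \ref{T11} controlling the rate of convergence of monotone profiles to $1$, the inequality $\phi(t+\sigma)\geq\psi(t)$ extends to all of $\R$ for some $\sigma<\sigma^\ast$, contradicting minimality. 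The main obstacle is precisely the presence of the convolution $N_{c'}\ast w$: the classical strong maximum principle extracts no information about $(N_{c'}\ast w)(t_0)$ from $w(t_0)=0$ alone. The Solar et al.\ insight bypasses this by exploiting the monotonicity $F_v<0$ to place the nonlocal contribution on the right-hand side with a favourable sign, after which the strong comparison reduces to a purely local problem.
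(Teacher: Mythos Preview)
Your overall architecture --- normalise at $-\infty$, slide to the minimal shift, and then rule out a strict gap via a strong comparison --- is exactly the skeleton the paper uses. But two points need correction, one minor and one serious.

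\medskip

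\textbf{Stage (i): wrong characteristic function.} Linearising (\ref{f1}) at $\phi=0$ does \emph{not} produce a convolution term: since $F(0,v)=0$ and $F_u(0,0)=1$, $F_v(0,0)=0$, the linearised equation is $\phi''-c'\phi'+\phi=0$, with characteristic roots $0<z_1\le z_2$ solving $z^2-c'z+1=0$. Your $\chi_-(z,c',\tau)=z^2-c'z-\frak{I}_{c',\tau}(-z)$ is incorrect. This is easily fixed (Lemma~\ref{USA} gives the right expansion $\phi(t)\sim(-t)^je^{z_1t}$), but it matters because the absence of the nonlocal term at $-\infty$ is precisely what makes the normalisation possible uniformly in the kernel.

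\medskip

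\textbf{Stage (iii), second subcase: the sliding step does not close.} Suppose $w=\phi(\cdot+\sigma^\ast)-\psi>0$ everywhere. You assert that after controlling the tails at $\pm\infty$ one can decrease $\sigma$ slightly and keep $\phi(\cdot+\sigma)\ge\psi$, contradicting minimality. The trouble is at $+\infty$. Lemma~\ref{USA} gives $1-\phi(t)\sim t^ke^{\hat z(\phi)t}$ where $\hat z(\phi)$ is \emph{some} negative root of $\chi_+$; there is no a~priori reason why $\hat z(\phi)=\hat z(\psi)$ or, even if the rates agree, why the leading coefficients are strictly separated at the optimal $\sigma^\ast$. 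Indeed, minimality of $\sigma^\ast$ can perfectly well be enforced by the behaviour at $+\infty$: if $\displaystyle\lim_{t\to+\infty}\frac{1-\psi(t)}{1-\phi(t+\sigma^\ast)}=1$, then any $\sigma<\sigma^\ast$ gives $\phi(t+\sigma)<\psi(t)$ for large $t$, and the slide fails. Your ``parallel analysis at $+\infty$ via $\chi_+$'' does not supply the needed strict separation.

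The paper meets this obstruction head--on in Corollary~\ref{Com}: after sliding one way and finding that the $+\infty$ asymptotics are forced to match, it slides the \emph{other} profile in the opposite direction, which then forces matching at $-\infty$. Once two distinct ordered profiles share the leading term at $-\infty$, the paper does \emph{not} attempt a further slide. Instead it sets $w(t)=(\psi(t)-\phi(t))e^{-\lambda t}$ with $\lambda\in(z_1,z_2)$ (or $\lambda=1$ when $c=2$), so that $w(\pm\infty)=0$, and evaluates
\[
w''-(c-2\lambda)w'+(\lambda^2-c\lambda+1)w=e^{-\lambda t}\bigl((\mathcal F\psi)(t)-(\mathcal F\phi)(t)\bigr)
\]
at the global maximum of $w$. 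Here $\mathcal F\phi=\phi\,\dfrac{(1+\gamma)(N_c*\phi)}{1+\gamma(N_c*\phi)}$ is monotone in $\phi$, so the right side is strictly positive while the left side is non-positive at the maximum (since $\lambda^2-c\lambda+1\le 0$). This weighted--maximum device is the missing idea: it converts the ordering $\psi>\phi$ and the matched $-\infty$ asymptotics directly into a sign contradiction, with no need for a second sliding step at $+\infty$.
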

 It is important to stress that the above uniqueness result is valid only within the sub-class of  monotone wavefronts. As it was shown in \cite{HK}, the uniqueness property does not hold within the larger class of all wavefronts even for the {\it nonlocal}\footnote{But it does hold for the delayed {\it local} KPP-Fisher equation, see \cite{ST}. } KPP-Fisher equation (i.e. when $\gamma =0$). 
 
Theorem \ref{T12} is a  non-trivial extension of the uniqueness result from \cite{FZ} established for the case $\gamma=0$. Indeed,  the proof in \cite{FZ} (see also \cite{GT})
uses  in essential way  the sub-tangency property of $F(u,v)$ (given by (\ref{F})) at the equilibrium $1$. In particular, this property (valid only for $\gamma =0$) is needed to identify the first term of   asymptotic representation of the wavefront profile at $+\infty$.  Hence, in order to prove Theorem \ref{T12} (where $\gamma >0$), it was necessary to find a completely different way to the uniqueness  problem.  In part, our approach was suggested by the recent study \cite{ST} proposing a new method for tackling the wave uniqueness problem for the delayed differential reaction-diffusion equation. However,  the proofs in \cite{ST} work only for the delayed diffusive equations with local interaction. To get rid of this restriction imposed in \cite{ST}, here  we are considering only monotone wavefronts  and  combine the main ideas from \cite{ST} with the sliding solution method of   Berestycki and  Nirenberg  \cite{cdm}.

Finally, a few words concerning the organization of the paper. Theorem \ref{T11} is proved in Section 2.   Section 3 contains the proofs of Theorem \ref{T12} as well as several other  auxiliary assertions.   

\section{Monotone wavefronts: the existence}
\subsection{Sufficiency conditions}
In this subsection, using the monotone iteration techniques developed in \cite{FZ,GT,wz}, we prove that equation  (\ref{i1}) has a monotone wavefront propagating with the  speed $c'\geq 2$ in the case when the characteristic equation $\chi_+(z,c',\tau)=0$ has a negative root $z'$  satisfying the inequality 
{$\gamma(z'^2 -cz') \leq 1$ }.  In what follows, we will omit the prime symbol in  $c'$ and write $\chi_+(z',c,\tau)=0$. 
 Next, $C_b(\R,\R)$ will denote the space of all bounded continuous functions (its topology will be chosen later). 

First, we observe that equation (\ref{f1}) can be rewritten in the form 
\begin{equation}\label{f1e}
 \phi''(t) - c\phi'(t) +\phi(t) =({\mathcal F}\phi)(t), 
\end{equation}
where operator ${\mathcal F}: C_b(\R,\R) \to C_b(\R,\R)$ defined by 
$$ ({\mathcal F}\phi)(t) = \phi(t)\frac{(1+\gamma)(N_c *\phi)(t)}{1+\gamma (N_c*\phi)(t)} $$
is clearly monotone non-decreasing in $\phi$.  Now, suppose that $c \geq 2$, then the equation $z^2-cz+1=0$ has two positive roots $0< z_1\leq z_2$ (counting multiplicity).  Furthermore,  it is easy to see \cite{FZ,GT} that 
every non-negative bounded solution of (\ref{f1e}) should satisfy 
the integral equation 
\begin{equation}\label{fie}
\phi(t) = \int_0^{+\infty}A(s) ({\mathcal F}\phi)(t+s)ds =: (\mathcal{A}\phi)(t), 
\end{equation}
where $$ A(s) = \left\{
\begin{array}{ccc}
a(e^{-z_1 s}- e^{-z_2 s})&,& s \geq 0\  \mbox{and}\ c >2,\\
se^{- s}&,& s\geq 0\  \mbox{and}\ c =2,\\
0 &,& s \leq 0, \ c \geq 2, 
\end{array}, \right. $$
with positive $a$  chosen to assure the normalization condition $\int_\R A(s)ds =1$.

Next, we  consider the following  modification of  equation  (\ref{f1}): 
\begin{equation}\label{f1m}
 \phi''(t) - c\phi'(t) +\phi(t)\left(\frac{1-(N_c *\phi)(t)}{1+\gamma}\right)=0.
\end{equation}
By our assumptions, for the fixed speed $c=c'$, the integral 
$$
\frak{I}_{c,\tau}(\lambda) = \int_\R N_c(s,\tau)e^{-\lambda s}ds
$$
is converging for $\lambda =z'<0$ and therefore $\Lambda= (-\lambda_0(c),0] \supset [z',0]$. Therefore \cite[Theorems 1.1, 1.2]{FZ} imply the existence of a unique (up to  translation) monotone wavefront solution $\phi_+(t)$ to equation (\ref{f1m}). Observe here that $c' \geq 2 > c_*:= 2/\sqrt{1+\gamma}$, with $c_*$ being the minimal speed of propagation in (\ref{f1m}). 
Clearly,   
\begin{equation}\label{f1u}
 \phi''_+(t) - c\phi'_+(t) +\phi_+(t)\left(\frac{1-(N_c *\phi_+)(t)}{1+\gamma (N_c *\phi_+)(t)}\right)\geq 0, 
\end{equation}
so that $(\mathcal{A}\phi_+)(t) \leq \phi_+(t), \ t \in \R,$ (i.e. $\phi_+(t)$ is an upper solution for equation  (\ref{f1})). 

Now, since $\chi_+(z',c,\tau)=0,$ $\chi_+(0,c,\tau) <0,$ $\chi_+(-\lambda_0(c)+,c,\tau) =-\infty,$ and  $\chi^{(4)}_+(z,c,\tau) <0, \ z \in \Lambda$, the  function  
$\chi_+(z,c,\tau)=0$ has at  most four negative zeros.   Without  loss of generality we can assume that $z'$ is the maximal negative zero so that $\chi_+(z,c,\tau)<0$
for $z \in (z',0]$. 
\subsubsection{Non-critical case: $c>2$ and $z'$ is a simple zero of $\chi_+(z,c,\tau)$ satisfying \\ $\gamma(z'^2 -cz') \leq 1$. }\label{S212}
Assuming the above restrictions,   we will consider the following function
$$ \phi_-(t) = \left\{
\begin{array}{ccc}
1- e^{z' t} &,& t \geq \zeta \\
a e^{z_1 t}&,& t \leq \zeta
\end{array}, \right. $$
where positive $a$ and $\zeta \in \R$ are chosen to assure the continuity of the derivative $\phi_-'(t)$ on $\R$. 
Taking into account the opposite convexities of these two pieces of $\phi_-$, it is easy to deduce 
the existence of such $a, \zeta$; in addition, due to  the same reason,
\begin{equation}\label{aux}
1- e^{z' t} < a e^{z_1 t}, \quad t < \zeta.
\end{equation}
Note that $\phi_+(t)$ and $\phi_-(t)$ have similar  asymptotic behavior at $+\infty$ and  $\phi_+(t)$ decreases more slowly than  $\phi_-(t)$  as $t \to -\infty$. 
 Indeed, since $z'$ is a simple zero of $\chi_+(z,c,\tau)$, invoking \cite[Lemma 3.2]{FZ}, we find that 
$$
\phi_+(t) = 1- e^{z't}(\theta_+ + o(1)), \ t \to +\infty, 
$$
for some $\theta_+ >0$. On the other hand, since $c>2> 2/\sqrt{1+\gamma}$ and reaction term of  equation (\ref{f1m}) satisfies the following sub-tangency   condition at $0$: 
$$\phi(t)\left(\frac{1-(N_c *\phi)(t)}{1+\gamma}\right)\leq \frac{\phi(t)}{1+\gamma},$$
we may conclude (cf. \cite[Lemma 28]{GT}) that, for some $\theta_- >0$, it holds 
$$
\phi_+(t) = e^{z_\gamma t}(\theta_- + o(1)), \ t \to -\infty, 
$$
where $z_\gamma < z_1$ is the minimal positive root of equation $z^2-cz+1/(1+\gamma)=0$.
Therefore there exists some non-negative $\sigma$ such that $\phi_+(t+\sigma) > \phi_-(t)$ for all $t \in \R$, cf. \cite[Lemma 18]{GT}.
Clearly, without loss of generality, we can suppose that $\sigma =0$ so that 
\begin{equation} \label{pp}
\phi_-(t) < \phi_+(t), \quad t \in \R. 
\end{equation}
The above form of function  $\phi_-$ was suggested in  \cite{FZ,GT}. It follows from the definition of $\zeta$ that 
$$
1- e^{z' \zeta} = a e^{z_1 \zeta},
$$
so that $e^{z' \zeta} \in (0,1)$.  Therefore,  for all $t \geq \zeta$, 
$$
1+\gamma (N_c *\phi_-)(t) \geq 1+\gamma (N_c *(1-e^{z' \cdot} )(t) = 1 +\gamma(1- (1+\gamma)(z'^2-cz')e^{z't}) \geq 
$$
\begin{equation}\label{zn}
(1 +\gamma)(1- \gamma (z'^2-cz')e^{z'\zeta}) > (1 +\gamma)(1- \gamma (z'^2-cz')) \geq 0.  
\end{equation}
\begin{lemma}\label{US}  Assume  that  $\gamma(z'^2 -cz') \leq 1$ then 
\begin{equation}\label{f1l}
I_-(t):= \phi''_-(t) - c\phi'_-(t) +\phi_-(t)\left(\frac{1-(N_c *\phi_-)(t)}{1+\gamma (N_c *\phi_-)(t)}\right)\leq 0, \ t \not= \zeta, 
\end{equation}
i.e. $\phi_-(t)$ is a  lower solution for  (\ref{f1}), $(\mathcal{A}\phi_-)(t) \geq \phi_-(t), \ t \in \R$.  
\end{lemma}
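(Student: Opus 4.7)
The plan is to analyze $I_-(t)$ separately on the two pieces $t<\zeta$ and $t>\zeta$ defining $\phi_-$, leveraging the characteristic identity $\chi_+(z',c,\tau)=0$ (equivalently, $\frak{I}_{c,\tau}(z')=(1+\gamma)(z'^2-cz')$) together with $z_1^2-cz_1+1=0$. The monotonicity (hence positivity) of the relevant quantities is ensured by the fact that $z'<0$ and $\zeta>0$ (this last follows from $1-e^{z'\zeta}=ae^{z_1\zeta}>0$).

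On the exponential tail $t<\zeta$, where $\phi_-(t)=ae^{z_1 t}$, the identity $z_1^2-cz_1+1=0$ yields $\phi_-''(t)-c\phi_-'(t)=-\phi_-(t)$. Substituting and combining fractions collapses $I_-(t)$ to
$$
I_-(t)=-\phi_-(t)\cdot\frac{(1+\gamma)(N_c*\phi_-)(t)}{1+\gamma (N_c*\phi_-)(t)},
$$
which is manifestly non-positive since $\phi_-, N_c\geq 0$ and the denominator is $\geq 1$. So this half of the argument requires no use of the hypothesis $\gamma(z'^2-cz')\leq 1$.

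The substance of the proof is on $t>\zeta$, where $\phi_-(t)=1-e^{z't}$ and $\phi_-''(t)-c\phi_-'(t)=-(z'^2-cz')e^{z't}$. The first key step is the global inequality $\phi_-(s)\geq 1-e^{z's}$, valid for every $s\in\R$ (equality for $s\geq\zeta$, while for $s<\zeta$ it is precisely (\ref{aux})). Convolving against $N_c$ and invoking $\frak{I}_{c,\tau}(z')=(1+\gamma)(z'^2-cz')$ gives two auxiliary estimates,
\begin{align*}
1-(N_c*\phi_-)(t) &\leq (1+\gamma)(z'^2-cz')e^{z't},\\
1+\gamma(N_c*\phi_-)(t) &\geq (1+\gamma)\bigl[1-\gamma(z'^2-cz')e^{z't}\bigr],
\end{align*}
whose right-hand sides are compatible because $e^{z't}<1$ on $t>\zeta$ and $\gamma(z'^2-cz')\leq 1$. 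The subcase $1-(N_c*\phi_-)(t)\leq 0$ is immediate ($I_-(t)\leq -(z'^2-cz')e^{z't}<0$); in the subcase $1-(N_c*\phi_-)(t)>0$ both estimates can be combined to obtain
$$
I_-(t)\leq (z'^2-cz')e^{z't}\left(\frac{1-e^{z't}}{1-\gamma(z'^2-cz')e^{z't}}-1\right),
$$
and the bracketed factor is $\leq 0$ iff $\gamma(z'^2-cz')\leq 1$. This balancing is the main obstacle, and it shows that the hypothesis on $\gamma(z'^2-cz')$ is exactly what is needed to close the argument.

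Finally, to convert the pointwise inequality $I_-(t)\leq 0$ into the integral inequality $(\mathcal{A}\phi_-)(t)\geq \phi_-(t)$, I rewrite it as $\phi_-''(t)-c\phi_-'(t)+\phi_-(t)\leq (\mathcal{F}\phi_-)(t)$ and convolve both sides with the positive Green's kernel $A(s)$ appearing in (\ref{fie}); boundedness of $\phi_-$ together with the positivity of $A$ yields the desired integral inequality, and the one-sided jump of $\phi_-''$ at $t=\zeta$ disappears after integration.
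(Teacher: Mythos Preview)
Your proof is correct and follows essentially the same route as the paper: the same eigenfunction observation on $t<\zeta$, the same two convolution estimates on $t>\zeta$ derived from $\phi_-(s)\geq 1-e^{z's}$ and $\frak{I}_{c,\tau}(z')=(1+\gamma)(z'^2-cz')$, and the same final comparison $\frac{1-e^{z't}}{1-\gamma(z'^2-cz')e^{z't}}\leq 1$. Your subcase $1-(N_c*\phi_-)(t)\leq 0$ is actually vacuous (since $\phi_-<1$ everywhere forces $(N_c*\phi_-)<1$), but this does no harm.
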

\begin{proof}  Since (\ref{f1l}) can be written as 
$$
 \phi''_-(t) - c\phi'_-(t) +\phi_-(t) \leq \phi_-(t)\frac{(1+\gamma)(N_c *\phi_-)(t)}{1+\gamma (N_c*\phi_-)(t)} $$
 and  $e^{z_1 t}$ is an eigenfunction for the linear equation $y''(t) -cy'(t) + y(t)=0$, relation (\ref{f1l}) clearly holds for all $t < \zeta$.

Now,  using (\ref{aux}) we find  that, for all $t \in \R$, 
$$
1-(N_c *\phi_-)(t) \leq 1-(N_c *(1-e^{z'\cdot })(t) = (N_c *e^{z'\cdot })(t) = (1+\gamma)(z'^2-cz')e^{z't}.
$$
Note that $z'^2-cz' >0$. Therefore,  in view of  (\ref{zn}), for $t >\zeta$, we obtain that
\begin{eqnarray}
\frac{e^{-z't}I_-(t)}{z'^2-cz'}
&\leq& -1 + \frac{[1- e^{z' t}](1+\gamma)}{1 +\gamma(1- (1+\gamma)(z'^2-cz')e^{z't})}\nonumber\\
&=& -1 + \frac{1- e^{z' t}}{1- \gamma(z'^2-cz')e^{z't}}=  \frac{e^{z' t}(\gamma(z'^2-cz')-1)}{1 - \gamma(z'^2-cz')e^{z't}}
\leq 0. \nonumber
\end{eqnarray}
This completes the proof of Lemma \ref{US}. \end{proof}
Since operator $\mathcal{A}$ is monotone, (\ref{pp}) implies that 
$$
\phi_- \leq \mathcal{A}\phi_- \leq \mathcal{A}\phi_+ \leq \phi_+. 
$$
Similarly, since $\phi_\pm(t) \leq \phi_\pm(t+s)$ for each $s>0$, we conclude that $\mathcal{A}\phi_\pm(t)$ are nondecreasing functions on $\R$. 
In this way, the sequence $\phi_n(t) = \mathcal{A}^n\phi_+, \ n =0,1,2\dots $ consists from non-decreasing continuous functions satisfying 
$$
\phi_-(t) \leq \phi_{n+1}(t)  \leq \phi_{n}(t)  \leq \phi_+(t), \ t \in \R. 
$$
Let $\phi_*(t) =\lim_{n\to +\infty} \phi_n(t)$. Then $\phi_*(t)$ is nondecreasing on $\R$ and $\phi_*(-\infty)=0$, $\phi_*(+\infty)=1$. 
Next, since $\phi_{n+1} = \mathcal{A}\phi_n, $ the obtained limit function satisfies 
$
\phi_*= \mathcal{A}\phi_* 
$  in virtue of Lebesgue's dominated convergence theorem. 
This means that $\phi_*(t)$ is a wavefront profile.  Since $\phi_*(t) \in (0,1), \ t \in \R$,    it is easy to see that  $\phi'_*(t) >0$   for all $t\in \R$, see also Lemmas \ref{mono} and \ref{USAI} below.
\subsubsection{Critical case: either $c=2$ or $z'$ is a multiple zero of $\chi_+(z,c,\tau)$ satisfying \\ $\gamma(z'^2 -cz') \leq 1$.}
First, we consider the case when $\gamma(z'^2 -cz') < 1$. Then the continuity of  $\frak{I}_{c,\tau}(\lambda)$  and a direct geometric analysis of the graph of $\chi_+$ show that there exist strictly decreasing sequences $\gamma_j \to \gamma$ and  $c_j \to c$ such that 
$\gamma_j(z_j'^2 -c_jz_j') < 1$ where $z_j$ is the (maximal) negative simple zero  of $\chi_+(z,c_j,\tau)$.  By the result of Subsection \ref{S212}, equation (\ref{f1})  $\gamma =\gamma_j$ and  $c= c_j$  has a monotone wavefront $\phi_j$ for each  $j$.  Without loss of generality, we can
assume  the condition $\phi_j(0)=0.5$. In addition, we conclude from  equation (\ref{f1})  
that the derivatives $\phi_j'(t)$ are uniformly bounded, 
$
0 < \phi'_j(t) \leq 2/c, 
$
$t\in \R$, $j\in \N$. 
Therefore the functional sequence $\{\phi_j(t)\}$ converges, uniformly on bounded sets, to some nondecreasing non-negative bounded  function $\phi_*(t),$ $\phi_*(0)=0.5$. 
Taking limit,  as $j \to +\infty$, in an appropriate integral form of the differential equation
$$
 \phi''_j(t) - c_j\phi'_j(t) -\phi_j(t) = - 2\phi_j(t) +({\mathcal F}\phi_j)(t), 
$$
we find that $\phi_*(t)$ also satisfies (\ref{f1}).  Since $\phi_*(0)= 0.5$ and $\phi_*(t)$ is a non-negative and monotone function,  we conclude that
$\phi_*(+\infty)= 1,$ $ \ \phi_*(-\infty)=0$. This completes the analysis of the critical case when $\gamma(z'^2 -cz') <1$.

Next, fix $c,\gamma, \tau$ and suppose that  $\gamma(z'^2 -cz') = 1$.  Also consider a sequence of shifted kernels $M_j(v,\tau):= N_c(v+1/j,\tau)$ together with the problem
\begin{equation}\label{f1M}
 \phi''(t) - c\phi'(t) +\phi(t)\left(\frac{1-(M_j *\phi)(t)}{1+\gamma (M_j*\phi)(t)}\right)=0, \quad  \phi(-\infty) = 0 , \;\; \phi( +\infty)=1. 
\end{equation}
Clearly, $M_j$ has the same general properties as $N_c$ while 
$$
(M_j*\phi)(t)= \int_\R N_c(v,\tau)\phi(t-v+1/j)dv
$$
and 
the characteristic function $\chi_+$ at the positive equilibrium takes the form
$$
\chi_{+,j}(z,c,\tau) = z^2-cz - \frac{e^{z/j}}{1+\gamma} \int_\R N_c(v,\tau)e^{-z v}dv. 
$$
Since $e^{z/j} <1$ for negative $z$, equation $\chi_{+,j}(z,c,\tau) =0$ has a negative root $z'_j$ satisfying the relation 
$\gamma(z'^2_j -cz')<1$  for all positive integer $j$. By the first part of this subsection, this implies the existence of strictly monotone solution $\phi_j(t)$ of (\ref{f1M})
for each $j$.  As before, we assume that $\phi_j(0)=0.5$ and that $\{\phi_j(t)\}$ converges, uniformly on compact subsets of $\R$, to a non-decreasing function $\phi_*(t)$.   
Thus we can use the Lebesgue dominated convergence theorem to establish the limit 
$$
(M_j*\phi_j)(t)= \int_\R N_c(v,\tau)\phi_j(t-v+1/j)dv \to
(M_c*\phi_*)(t)= \int_\R N_c(v,\tau)\phi_*(t-v)dv
$$
for each $t \in \R$. In order to complete the proof, we can now argue as in the first part of this subsection.
\subsection{Necessity of the condition imposed on $\chi_+(z,c,\tau)$}\label{S22}
Let $\phi$ be a positive monotone wavefront of equation (\ref{f1}). Then equation   (\ref{f1}) can be rewritten as 
\begin{equation}\label{R}
\phi''(t)-c\phi'(t) + (1-R(t))\phi(t)=0, 
\end{equation}
where  the function 
$$R(t)=\frac{(1+\gamma)(N_c *\phi)(t)}{1+\gamma (N_c*\phi)(t)}, \quad R(-\infty)= 0, $$
is also monotone on $\R$ and satisfies $\int_{-\infty}^0 |R'(s)|ds = R(0)<\infty$.  This allows the use of  the Levinson asymptotic integration theorem  showing  that the speed of propagation $c$ should satisfy the inequality  $c\geq 2$, cf. the proof of Lemma 18 in \cite{GT}. 

Next, our proofs in this subsection simplify  when the support supp\,$N_c$ of $N_c$ belongs to $(-\infty,0]$. Then the characteristic function $\chi_+$ takes the form  
$$
\chi_+(z,c,\tau) = z^2-cz - \frac{1}{1+\gamma} \int_{-\infty}^0 N_c(v,\tau)e^{-z v}dv,
$$
so that $
\chi_+(0,c,\tau)<0$ and $\chi_+(-\infty,c,\tau)=+\infty$. In consequence, $\chi_+(z,c,\tau)$ has at least one negative zero if  supp\,$N_c \subset (-\infty,0]$.  Thus we have to analyze 
only the situation when supp\,$N_c \cap (0, +\infty) \not=\emptyset$, i.e. when there exists  $r_0 >0$ that for each $s \in \R$ and 
positive $\nu = \int^{+\infty}_{r_0}N_c(r,\tau)dr \in (0,1)$, it holds
\begin{eqnarray}
\label{1N}
& & 1-(N_c*\phi)(s) = \int_\R(1-\phi(s-r))N_c(r,\tau)dr \geq  \nonumber \\
& &  \int^{+\infty}_{r_0}(1-\phi(s-r))N_c(r,\tau)dr\geq \nu (1-\phi(s-r_0)).
\end{eqnarray}
Our subsequent analysis is inspired by the arguments proposed in \cite{FZ} and \cite{HT},  we present them here for the sake of completeness. 

Set $y(t)=1-\phi(t)$, where $\phi(t)$ is a positive monotone wavefront. Then (\ref{fie}) takes the form
$$
y(t) = \int_0^{+\infty}A(s)\frac{(1+\gamma)(N_c*\phi)(t+s)y(t+s)+ (N_c*y)(t+s) }{1+\gamma (N_c*\phi)(t+s)}ds. 
$$
In view of (\ref{1N}), this implies that, for all $t \in \R$,
$$
y(t) \geq  \int_0^{+\infty}A(s)\nu \frac{y(t+s-r_0) }{1+\gamma}ds  \geq  \int_0^{r_0/2}A(s) \frac{\nu}{1+\gamma}y(t+s-r_0)ds\geq \hat \nu y(t-r_0/2),
$$
where $\hat \nu =  \nu\int_0^{r_0/2}A(s)ds/({1+\gamma})$. 
Therefore, for some $C>0$ and $\sigma= 2r_0^{-1}\ln \hat \nu <0$,  
\begin{equation}\label{rha}
y(t) \geq Ce^{\sigma t}, \quad t \geq 0. 
\end{equation}
Hence, 
\begin{equation}\label{si}
0 \geq \sigma_* = \liminf_{t\to +\infty} \frac 1 t \ln y(t) \geq \sigma.  
\end{equation}
Suppose, on the contrary, that $\chi_+(z,c,\tau)$ has not negative zeros. Then $\chi_+(\sigma_*,c,\tau)<0$ (we admit here the situation when $\chi_+(\sigma_*,c,\tau)=-\infty$), so that there exist a large $n_0>0$ and small $\delta, \rho >0$ such that 
$$
q:=\inf_{x \in (\sigma_*-\delta, \sigma_*+\delta)} \frac{(1+\gamma)(1-\rho) +\int_{-n_0}^{n_0} N_c(v,\tau)e^{-x v}dv}{(1+\gamma)(x^2-cx +1)}  >1.
$$
Next, let $\mu \in  (\sigma_*-\delta, \sigma_*)$ be such that 
$$\mu+ n_0^{-1}\ln q > \sigma_*. $$
Clearly, for some $C_1 >0$, it holds  that 
$$
y(t) \geq C_1e^{\mu  t}, \quad t \geq -n_0. 
$$
Since $y(+\infty) =0$, without loss of generality, we can assume that $(N_c*y)(s) < \rho$ for all $s \geq 0$, then $(N_c*y)(t+s) < \rho$ for all $t, s \geq 0$ and 
$$
y(t) \geq \int_0^{+\infty}A(s)\frac{(1+\gamma)(1- \rho)y(t+s)+ (N_c*y)(t+s) }{1+\gamma}ds \geq 
$$
$$
C_1 e^{\mu t} \int_0^{+\infty}A(s)e^{\mu s}\frac{(1+\gamma)(1- \rho)+ \int_{-n_0}^{n_0} N_c(v,\tau)e^{-\mu  v}dv }{1+\gamma}ds=
$$
$$
C_1 \frac{e^{\mu  t}}{\mu^2-c\mu +1} \frac{(1+\gamma)(1- \rho)+ \int_{-n_0}^{n_0} N_c(v,\tau)e^{-\mu v}dv }{1+\gamma} \geq C_1q e^{\mu  t},  \quad t \geq 0. 
$$
Repeating the same argument for $y(t)$ on the interval $[n_0,+\infty)$, we find similarly that $y(t) \geq C_1q^2 e^{\mu  t},  \ t \geq n_0.$  Reasoning in this way, 
we obtain the estimates
$$y(t) \geq C_1q^{j+1} e^{\mu  t}\geq C_1e^{(\mu+n_0^{-1}\ln q)t},  \ t \in [n_0j,n_0(j+1)], \quad j=0,1,2\dots 
$$
This yields the following contradiction:  
$$
\sigma_* = \liminf_{t\to +\infty} \frac 1 t \ln y(t) \geq \mu+n_0^{-1}\ln q > \sigma_* . 
$$
As a by product of the above reasoning, we also get the following statement: 
\begin{lemma}\label{bp} Suppose that supp\,$N_c \cap (0, +\infty) \not=\emptyset$ and let $\phi(t)$ be a positive monotone wavefront.  Set $y(t)=1-\phi(t)$ and let $\sigma_*$ be defined as in (\ref{si}). Then $\chi_+(\sigma_*,c,\tau)$ is finite and  non-negative. In particular,  $-\lambda_0(c) < \sigma_* <0$ and  $\chi_+(x,c,\tau)$ has at least one zero on the interval $[\sigma_*,0]$. 
\end{lemma}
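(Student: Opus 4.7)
The plan is to extract the three claimed facts from the bootstrap argument developed in the paragraph immediately preceding the lemma. The structure I have in mind is: first establish that $\sigma_*$ is a finite real number in $(-\infty,0]$; next prove the sign condition $\chi_+(\sigma_*,c,\tau) \geq 0$; then read off finiteness of $\chi_+(\sigma_*,c,\tau)$ (equivalently $\sigma_* > -\lambda_0(c)$) and locate a zero in $[\sigma_*,0]$ by an intermediate-value argument.

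Finiteness and sign of $\sigma_*$ are almost immediate from what precedes the lemma: the exponential lower bound (\ref{rha}) yields $\sigma_* \geq \sigma > -\infty$, while $y(t) \in (0,1)$ for $t \gg 0$ forces $\sigma_* \leq 0$. So it remains to show $\chi_+(\sigma_*,c,\tau) \geq 0$, with $\chi_+$ interpreted in an extended sense (set $\chi_+(z,c,\tau):=-\infty$ for $z \leq -\lambda_0(c)$, consistently with $\frak{I}_{c,\tau}(z)=+\infty$ there).

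I would argue by contradiction, assuming $\chi_+(\sigma_*,c,\tau) < 0$ in that extended sense. The goal is to produce $n_0,\delta,\rho>0$ for which the quantity $q$ displayed just before the lemma satisfies $q>1$ on $(\sigma_* - \delta,\sigma_* + \delta)$. If $\sigma_* > -\lambda_0(c)$, this is done exactly as in the paper: continuity of $\chi_+$ at $\sigma_*$ together with small $\rho$ and large $n_0$ forces the numerator of the ratio above $(1+\gamma)(x^2-cx+1)$ on a neighborhood of $\sigma_*$. If instead $\sigma_* \leq -\lambda_0(c)$, then for each fixed $x$ in a right-neighborhood of $\sigma_*$ one has $\int_{-n_0}^{n_0} N_c(v,\tau) e^{-xv}\,dv \to +\infty$ as $n_0\to+\infty$, since the whole integral already diverges; so $q$ can again be made strictly above $1$ by choosing $n_0$ large and $\delta,\rho$ small. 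With $q>1$ secured, the inductive estimate $y(t) \geq C_1 q^{j+1} e^{\mu t}$ on $[n_0 j, n_0(j+1)]$ from the preceding paragraph applies verbatim and delivers $\sigma_* \geq \mu + n_0^{-1}\ln q > \sigma_*$, the desired contradiction.

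Once $\chi_+(\sigma_*,c,\tau) \geq 0$ is established, the value $-\infty$ is ruled out, which automatically gives $\sigma_* > -\lambda_0(c)$ and hence finiteness. Since $\chi_+(0,c,\tau)=-1/(1+\gamma)<0$, continuity of $\chi_+(\cdot,c,\tau)$ on $(-\lambda_0(c),0]$ combined with the intermediate value theorem applied on $[\sigma_*,0]$ produces the required zero and forces the strict inequality $\sigma_* < 0$. I expect the main technical obstacle to be the case $\sigma_* \leq -\lambda_0(c)$ in the second step, where one has to check carefully that the lower bound on the truncated integral can be made uniform on a suitable neighborhood so that $q>1$ is genuinely achieved.
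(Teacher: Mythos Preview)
Your proposal is correct and follows essentially the same route as the paper: the lemma is stated there as a direct byproduct of the bootstrap inequality $y(t)\geq C_1 q^{j+1}e^{\mu t}$ derived in the paragraph immediately before it, with no separate proof given. Your explicit case split for $\sigma_*\leq -\lambda_0(c)$ is a reasonable elaboration of the one-line remark ``we admit here the situation when $\chi_+(\sigma_*,c,\tau)=-\infty$''; the uniformity you flag as the potential obstacle is easily handled, since for $x\leq\sigma_*$ one has $\int_{-n_0}^{n_0}N_c(v,\tau)e^{-xv}\,dv\geq\int_{0}^{n_0}N_c(v,\tau)e^{-\sigma_* v}\,dv\to+\infty$, while for $x$ slightly above $-\lambda_0(c)$ the assumption $\frak{I}_{c,\tau}(-\lambda_0(c)^+)=+\infty$ forces $\chi_+(x,c,\tau)$ to be large negative, so a compactness/Dini argument gives $q>1$ on the full interval.
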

\begin{remark} \label{Re1} Lemma \ref{USA} below improves further  the result of Lemma \ref{bp}.  Next, let $\{z: \Re z > \alpha(\phi)\}\subset \C$ be the maximal open  strip where the Laplace transform $\tilde y(\lambda)$ of $y(t)$ is defined. Since $y(t)$ is bounded on $\R$, we have that $ \alpha(\phi) \leq 0$. On the other hand, by the definition of $\sigma_*$,  it is easy to see 
that $\lim_{t\to+\infty}y(t)e^{-\lambda t} =+\infty$ for every $\lambda < \sigma_*$. Thus $ \alpha(\phi) \geq  \sigma_* > -\lambda_0(c)$. Note also that $\alpha(\phi)$ is a singular point 
of $\tilde y(\lambda)$, cf. \cite{LT}. 
\end{remark}
\section{Monotone wavefronts: the uniqueness}
\subsection{Three auxiliary results}
\begin{lemma}\label{mono} Suppose that $\phi(t) \in [0,1], \ t \in \R$,  is a non-constant solution of equation (\ref{f1}). Then  $\phi(t) >0$ for all $t$ and there exists $s' \in \R \cup\{+\infty\}$ such that  $\phi'(t) >0$ for all $t < s'$ and $\phi(t)=1$ for all $t \geq s'$.  Moreover, if  $s'=+\infty$ if supp $N_c \cap (0,+\infty)\not=\emptyset$. 
 \end{lemma}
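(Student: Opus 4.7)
My plan is to prove Lemma \ref{mono} in three stages.

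\emph{Strict positivity.} View (\ref{f1}) as the linear second-order ODE
\[\phi''(t)-c\phi'(t)+a(t)\phi(t)=0, \qquad a(t):=\frac{1-(N_c*\phi)(t)}{1+\gamma(N_c*\phi)(t)},\]
whose coefficient is continuous and bounded since $(N_c*\phi)(t)\in[0,1]$. A zero of $\phi$ would be a global minimum, forcing $\phi'=0$ there, and ODE uniqueness then gives $\phi\equiv 0$, contradicting non-constancy.

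\emph{Identifying the plateau.} Put $s':=\inf\{t:\phi(t)=1\}\in\R\cup\{+\infty\}$; when $s'<+\infty$, continuity gives $\phi(s')=1$, a global maximum, so $\phi'(s')=0$. Let $T:=\sup\{t\ge s':\phi\equiv 1\text{ on }[s',t]\}$ and argue by contradiction that $T=+\infty$: a finite $T$ forces $\phi'(T)=\phi''(T)=0$ by $C^2$-matching with the constant-$1$ plateau on $[s',T]$, and the ODE at $T$ then yields $(N_c*\phi)(T)=1$. Since $\phi\le 1$ and $\int N_c\,dv=1$, this forces $\phi(T-v)=1$ for every $v\in\mathrm{supp}\,N_c$, which contradicts the existence of points $t\in(T,T+\epsilon)$ with $\phi(t)<1$ via the continuity of $\phi$ and a suitable choice of $v$ in $\mathrm{supp}\,N_c$. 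Hence $\phi\equiv 1$ on $[s',+\infty)$.

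\emph{Strict monotonicity and the ``moreover'' clause.} Suppose $\phi'(t_0)\le 0$ at some $t_0<s'$. Combined with $\phi(-\infty)=0$ and $\phi\to 1$, compactness produces an interior critical point $t_1\in(-\infty,s')$ with $\phi(t_1)<1$, $\phi'(t_1)=0$, $\phi''(t_1)\ge 0$. The ODE and $\phi(t_1)>0$ then force $(N_c*\phi)(t_1)\ge 1$, hence equality, hence $\phi(t_1-v)=1$ for every $v\in\mathrm{supp}\,N_c$. Using $\{\phi=1\}=[s',+\infty)$ and $t_1<s'$, this forces $v\le t_1-s'<0$ for all $v\in\mathrm{supp}\,N_c$, i.e.\ $\mathrm{supp}\,N_c\subset(-\infty,0)$. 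If $0$ is an accumulation point of $\mathrm{supp}\,N_c$, continuity of $\phi$ yields an immediate contradiction; if $\sup\mathrm{supp}\,N_c<0$ and $s'<+\infty$, then near $s'$ the ODE reduces to $\phi''-c\phi'=0$ (because $(N_c*\phi)=1$ there), and the boundary data $\phi(s')=1$, $\phi'(s')=0$ forces $\phi\equiv 1$ on a left-neighborhood of $s'$, again contradicting the definition of $s'$. Thus $\phi'>0$ on $(-\infty,s')$ in general, and under $\mathrm{supp}\,N_c\cap(0,+\infty)\neq\emptyset$ an analogous $C^2$-matching at a finite $s'$ additionally forces $s'=+\infty$.

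The main obstacle is the nonlocal coupling, which blocks direct ODE unique-continuation at either boundary of the plateau $\{\phi=1\}$ since $a(t)$ depends on values of $\phi$ at distant points. The resolution throughout is the sharp equality case of $(N_c*\phi)(t)\le 1$, which converts each interior critical-point condition into a rigid geometric restriction on $\mathrm{supp}\,N_c$ and, combined with continuity and local ODE analysis, rules out every configuration other than the claimed dichotomy.
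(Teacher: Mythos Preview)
Your positivity argument in Stage~1 is the same as the paper's. The difficulty is in Stages~2 and~3, which, as written, lean on each other in a way that leaves a genuine gap.

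In Stage~2 you claim that a finite $T$ yields a contradiction because $(N_c*\phi)(T)=1$ forces $\phi(T-v)=1$ for all $v\in\mathrm{supp}\,N_c$, and that ``a suitable choice of $v$'' then hits a point of $(T,T+\epsilon)$ where $\phi<1$. This last step is simply false for general kernels: if, say, $N_c(v)=\delta(v+1)$, then the only information you extract is $\phi(T+1)=1$, which says nothing about $(T,T+1)$. More subtly, when $T=s'$ (the plateau degenerates to a single point) you cannot conclude $\phi''(T)=0$ from ``$C^2$-matching'', because there is no genuine left plateau to match against; the ODE only gives $\phi''(s')=-a(s')\le 0$, and if $\mathrm{supp}\,N_c\cap(0,+\infty)\ne\emptyset$ one has $a(s')>0$ strictly. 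So Stage~2 does not establish $\{\phi=1\}=[s',+\infty)$. Stage~3 then invokes precisely this description of $\{\phi=1\}$ to locate the contradiction, so the circularity is real.

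The paper avoids this entirely by first proving monotonicity via the integral representation
\[
\phi'(t)=\int_t^{+\infty}e^{c(t-s)}\,\omega(s)\,\phi(s)\,ds\ \ge\ 0,
\qquad \omega(t)=\frac{1-(N_c*\phi)(t)}{1+\gamma(N_c*\phi)(t)}\ge 0,
\]
obtained by integrating $\phi''-c\phi'=-\omega\phi$ with the boundedness of $\phi$. Once $\phi'\ge 0$ is known, the rest is immediate: if $\phi'(s')=0$ for some finite $s'$, the nonnegative integrand must vanish on $[s',+\infty)$, so $\omega\equiv 0$ and hence $\phi'\equiv 0$ there; this forces $\phi\equiv 1$ on $[s',+\infty)$ and simultaneously yields the ``moreover'' clause (because $(N_c*\phi)(s')=1$ with $\phi<1$ on $(-\infty,s')$ rules out any $v>0$ in $\mathrm{supp}\,N_c$). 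Your local-min argument from Stage~3 is then unnecessary. The moral is that the sign condition $\omega\ge 0$ should be exploited globally through the variation-of-constants formula for $\phi'$, rather than pointwise at putative extrema.
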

\begin{proof} Equation   (\ref{f1}) can be rewritten as $\phi''(t)-c\phi'(t) + \omega(t)\phi(t)=0$, where $$\omega(t):= \left(\frac{1-(N_c *\phi)(t)}{1+\gamma (N_c*\phi)(t)}\right)\geq 0, \quad t \in \R,$$
is a continuous bounded function. Suppose that  $\phi(s)=0$ for some $s$. Then non-negativity of $\phi$ implies that 
$\phi'(s)=0$. 
Therefore, in view of the existence and uniqueness theorem for linear ordinary  differential equations, we have that $\phi \equiv 0$ which contradicts our assumption.  Thus 
$\phi(t) >0$ for all $t \in \R$. 

Next, integrating the above equation with respect to $\phi'(t)$, we find easily that 
$$
\phi'(t)=\int_t^{+\infty}e^{c(t-s)}\omega(s)\phi(s)ds \geq 0.$$
This immediately  implies  the existence of $s'$ with the above indicated properties. Now, if $s'$ is finite then $w(t)=0$ for all $t \geq s'$ so that $(N_c *\phi)(t)=1$ for all 
$t\geq s'$.  Evidently, this can happen if and only if $\phi(t) =1$ for all $t \geq s'$ and  supp $N_c \cap (0,+\infty)=\emptyset$. 
\end{proof}
In fact, as we will see from the next lemma, $s' =+\infty$ for every admissible $N_c$. 
\begin{lemma}\label{USAI} Suppose that supp $N_c \cap (0,+\infty)=\emptyset$ and  let $\phi(t)$ be a monotone wavefront to  equation (\ref{f1}). Then $y(t)=1-\phi(t)$
satisfies 
\begin{equation}\label{areI}
y(t) \geq y(s)e^{(s-t)/(c(1+\gamma))} \quad \mbox{for all} \ t \geq s. 
\end{equation}
\end{lemma}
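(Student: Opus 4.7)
The plan is to reduce (\ref{areI}) to a pointwise differential inequality $-y'(t)\le y(t)/(c(1+\gamma))$ and then integrate it.

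First I would rewrite the profile equation in terms of $y=1-\phi$. Since $\mathfrak{I}_{c,\tau}(0)=1$ gives $N_c*1\equiv 1$, equation~(\ref{f1}) becomes
$$
y''(t)-cy'(t)=g(t),\qquad g(t):=(1-y(t))\,\frac{(N_c*y)(t)}{1+\gamma\bigl(1-(N_c*y)(t)\bigr)}\ge 0.
$$
Using that $\phi'$ is bounded (hence so is $y'$), one has $y'(s)e^{-cs}\to 0$ as $s\to+\infty$, so integrating the identity $(y'e^{-ct})'=g(t)e^{-ct}$ from $t$ to $+\infty$ yields the representation
$$
-y'(t)=\int_0^{+\infty}g(t+u)\,e^{-cu}\,du.
$$

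The central step is a sharp bound on $g$. Because $\mathrm{supp}\,N_c\subset(-\infty,0]$, the convolution samples $y$ only to the right of the base point, so the monotonicity of $\phi$ (which makes $y$ nonincreasing) yields $(N_c*y)(s)\le y(s)$. Substituting this into both the numerator and the denominator of $g$, and then using that $u\mapsto u/(1+\gamma u)$ is increasing on $[0,1]$ with value $1/(1+\gamma)$ at $u=1$, I obtain
$$
g(s)\le\frac{(1-y(s))\,y(s)}{1+\gamma(1-y(s))}\le\frac{y(s)}{1+\gamma}.
$$
Combining this with the elementary bound $y(t+u)\le y(t)$, the integral representation gives
$$
-y'(t)\le\frac{1}{1+\gamma}\int_0^{+\infty}y(t+u)\,e^{-cu}\,du\le\frac{y(t)}{c(1+\gamma)}.
$$

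To conclude, I would integrate this differential inequality. On any interval where $y>0$, dividing by $y$ and integrating from $s$ to $t$ yields $\ln(y(s)/y(t))\le (t-s)/(c(1+\gamma))$, which is exactly (\ref{areI}). That $y$ cannot vanish at a finite point follows by applying the bound on $[s,t^{\ast}-\varepsilon]$ with $t^{\ast}=\inf\{t:y(t)=0\}<\infty$ and letting $\varepsilon\to 0^+$: the limit would force $y(t^{\ast})\ge y(s)e^{(s-t^{\ast})/(c(1+\gamma))}>0$, a contradiction. The main obstacle is isolating the sharp bound $g(s)\le y(s)/(1+\gamma)$: the factor $1+\gamma$ is not present in either intermediate estimate on its own and is recovered only by simultaneously exploiting the monotonicity of $y$ in both the numerator and the denominator of $g$, together with the elementary monotonicity of $u\mapsto u/(1+\gamma u)$.
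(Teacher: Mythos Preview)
Your proof is correct and follows essentially the same route as the paper's: both rewrite the profile equation for $y=1-\phi$, use the support condition together with the monotonicity of $\phi$ to obtain $(N_c*y)(s)\le y(s)$ (equivalently $N_c*\phi\ge\phi$), deduce the pointwise bound $g(s)\le y(s)/(1+\gamma)$, pass to the integral representation of $-y'$, and integrate the resulting inequality $-y'(t)\le y(t)/(c(1+\gamma))$. The only cosmetic difference is that the paper keeps the nonlinearity in the form $\phi\,\dfrac{y*N_c}{1+\gamma\,\phi*N_c}$ and isolates the remainder $r(t)\le 0$, whereas you bound $g$ directly; the underlying estimates are identical.
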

\begin{proof} We have 
\begin{equation}\label{efy}
y''(t)-cy'(t) - \phi(t)\frac{y*N_c(t)}{1+\gamma \phi*N_c(t)}=0, \quad t \in \R. 
\end{equation}
Clearly, $\phi*N_c(t) = \int_{-\infty}^0\phi(t-s)N_c(s)ds \geq \phi(t)\int_{-\infty}^0N_c(s)ds=\phi(t), \ t \in \R$, so that 
$$
y(t) = 1-\phi(t) \geq 1-\phi*N_c(t)= y*N_c(t), \quad   \frac{\phi(t)}{1+\gamma \phi*N_c(t)}\leq \frac{1}{1+\gamma},\quad t \in \R. 
$$
Using the notation
\begin{equation}\label{rr}
y'(t) =z(t), \quad r(t) =  \phi(t)\frac{y*N_c(t)}{1+\gamma \phi*N_c(t)} -  \frac{y*N_c(t)}{1+\gamma}\leq 0, \quad t \in \R,
\end{equation}
we find  that 
$$
z'(t)=cz(t) + \frac{y*N_c(t)}{1+\gamma} + r(t), \ t \in \R. 
$$
Since $z(\pm\infty)=0$, we also have 
$$
y'(t)=z(t) = -\int_t^{+\infty}e^{c(t-s)}\left(\frac{y*N_c(s)}{1+\gamma} + r(s)\right)ds\geq 
$$
$$
 -\int_t^{+\infty}e^{c(t-s)}\frac{y*N_c(s)}{1+\gamma}ds\geq  -\int_t^{+\infty}e^{c(t-s)}\frac{y(s)}{1+\gamma}ds=-y(t)\frac{1}{c(1+\gamma)}, \quad t \in \R. 
$$
Thus 
$$
(y(t)e^{t/(c(1+\gamma))})'\geq 0, \quad t \in \R, 
$$
which implies (\ref{areI}). 
\end{proof} 
\begin{lemma}\label{USA} Let $\phi(t)$ be a monotone wavefront to  equation (\ref{f1}). Then there exist $t_1, t_2 \in \R$  such that  
\begin{equation}\label{are}
\phi(t+t_1) = (-t)^je^{z_1t}(1+o(1)), \ t \to -\infty, \quad \phi(t+t_2) = 1-t^ke^{\hat zt}(1+o(1)), \ t \to +\infty. 
\end{equation}
where $j=0$ if $c >2$ and $j=1$ when $c=2$; $k \in \{0,1,2,3\}$ and  $\hat z=\hat z(\phi)$ is some negative root of the characteristic equation $\chi_+(z,c,\tau)=0$.\end{lemma}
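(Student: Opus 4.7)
The plan is to deduce both asymptotics from integral reformulations of the profile equation combined with Laplace-transform and Tauberian arguments, following the strategy used elsewhere in the paper and in the cited works \cite{FZ,GT,HT}. For the behavior as $t\to-\infty$, I would exploit the sub-tangency of the reaction at the trivial state: since $F(u,v) = u(1-v)/(1+\gamma v)$ satisfies $F(u,v) \leq u$ for $u,v \in [0,1]$, and $(N_c*\phi)(t) \to 0$ as $t\to -\infty$, the profile $\phi$ is asymptotically controlled near $-\infty$ by solutions of the linear equation $y''-cy'+y=0$, whose characteristic roots are the two positive numbers $0<z_1\leq z_2$ of $z^2-cz+1=0$. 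Starting from the integral representation (\ref{fie}) and applying an Ikehara-type tauberian theorem in the spirit of \cite[Lemma 3.2]{FZ} and \cite{GT}, one extracts the singular part of the one-sided Laplace transform of $\phi$ at its rightmost real singularity, yielding $\phi(t+t_1) = (-t)^j e^{z_1 t}(1+o(1))$ with $j=0$ when $c>2$ (simple pole at $-z_1$) and $j=1$ in the degenerate case $c=2$ (double pole). The monotonicity of $\phi$ from Lemma \ref{mono} is essential here to guarantee positivity of the leading coefficient.

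For the behavior as $t\to+\infty$, set $y(t) = 1-\phi(t)$ and linearize (\ref{f1}) at the positive equilibrium, isolating the nonlinear remainder:
$$y''(t) - cy'(t) - \frac{(N_c*y)(t)}{1+\gamma} = H(t),$$
where $H(t) = O(y(t)^2 + ((N_c*y)(t))^2)$ as $t\to+\infty$. By Remark \ref{Re1}, $\tilde y(\lambda)$ is holomorphic on $\{\Re\lambda>\alpha(\phi)\}$ for some $\alpha(\phi)\in[\sigma_*,0]$ with $\sigma_*>-\lambda_0(c)$, and $\alpha(\phi)$ is a singular point of $\tilde y$. Taking Laplace transforms of the displayed identity and invoking the relation
$$\int_\R N_c(v,\tau)e^{-\lambda v}\,dv = (1+\gamma)\bigl(\lambda^2-c\lambda-\chi_+(\lambda,c,\tau)\bigr),$$
one arrives at $\chi_+(\lambda,c,\tau)\,\tilde y(\lambda) = \tilde H(\lambda)$. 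The quadratic character of $H$ together with the preliminary exponential decay of $y$ furnished by Lemma \ref{bp} will show that $\tilde H$ is holomorphic on a strictly larger half-plane than $\tilde y$, and this forces $\chi_+(\alpha(\phi),c,\tau)=0$. Setting $\hat z := \alpha(\phi)<0$ and applying a refined Ikehara-type theorem for Laplace transforms with a pole of finite order then delivers $1-\phi(t+t_2) = t^k e^{\hat z t}(1+o(1))$, where $k+1$ equals the multiplicity of $\hat z$ as a real zero of $\chi_+$. Since $\chi^{(4)}_+(z,c,\tau)<0$ on $\Lambda$ (as recorded in Subsection \ref{S212}), this multiplicity cannot exceed $4$, hence $k\in\{0,1,2,3\}$.

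The principal obstacle I anticipate is the rigorous verification that $\tilde H(\lambda)$ extends holomorphically to a strip strictly to the left of $\Re\lambda=\hat z$. This requires showing that $H(t)\,e^{-\lambda t}$ remains integrable at $+\infty$ for $\Re\lambda$ slightly smaller than $\hat z$, a property that must be bootstrapped from the preliminary exponential decay rate of Lemma \ref{bp} using the convolution structure and the integrability hypothesis on $\mathfrak{I}_{c,\tau}(\lambda)$ near $-\lambda_0(c)$. A secondary difficulty is ensuring that the singularity of $\tilde y$ at $\hat z$ is of pure pole type with the prescribed order, thereby excluding essential or branch-type singularities; the monotonicity of $\phi$ together with a Hardy--Littlewood refinement of Ikehara's theorem is expected to handle this.
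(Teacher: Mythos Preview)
Your overall strategy---Laplace transform plus an Ikehara-type theorem---is a legitimate alternative to what the paper actually does, which is to invoke the asymptotic-integration result \cite[Lemma~22]{TAT} after reducing to a perturbed linear equation. However, there is a real gap in your plan that you have not closed.

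The problem is the step ``preliminary exponential decay of $y$ furnished by Lemma~\ref{bp}''. Lemma~\ref{bp} does \emph{not} give this. First, it applies only when $\operatorname{supp}N_c\cap(0,+\infty)\neq\emptyset$; the complementary case is simply not covered. Second, even in the case it does cover, Lemma~\ref{bp} controls $\sigma_*=\liminf_{t\to+\infty}t^{-1}\ln y(t)$, which is a \emph{lower} bound on $y$ along a subsequence (coming from (\ref{rha})); it does not yield an upper bound $y(t)=O(e^{lt})$ for any $l<0$, and without such a pointwise bound you cannot show that $H(t)$ (hence $\tilde H(\lambda)$) lives on a strictly larger half-plane than $\tilde y(\lambda)$. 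Your bootstrap therefore has nothing to start from.

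The paper fills this gap by a direct comparison argument that you are missing. The key ingredient is the inequality $(y*N_c)(t)\geq\rho\,y(t)$ for some $\rho>0$, proved separately for the two support cases (the case $\operatorname{supp}N_c\subseteq(-\infty,0]$ requires Lemma~\ref{USAI}). With this in hand one rewrites (\ref{efy}) as $y''-cy'-\kappa y=h(t)$ with $\kappa>0$ and $h(t)>0$ for large $t$, and a standard comparison then gives $y(t)\leq y(s)e^{l(t-s)}$ for $t\geq s\geq T_0$ with an explicit $l<0$. Only after this genuine exponential upper bound is established does the paper pass to the refined equation (\ref{efyk}) with $\epsilon(t)=O(e^{lt})$ and apply \cite[Lemma~22]{TAT}. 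Your Ikehara route would become viable at exactly the same point: once $y(t)=O(e^{lt})$ is known, the quadratic remainder $H(t)=O(e^{2lt})$ and the bootstrap proceeds. But you must supply that first step, and Lemma~\ref{bp} alone does not do it.
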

\begin{proof} \underline{Asymptotic representation of $\phi$ at $+\infty$}. 
Our first step is to establish that $y(t)=1-\phi(t)$ has an exponential rate of convergence to $0$ at $+\infty$. 
To prove this, we will need the next property:  
\begin{equation} \label{rh}
\mbox{For some}\ \rho >0 \ \mbox{it holds that} \quad (y*N_c)(t) \geq \rho y(t), \quad  t \in \R.
\end{equation}
Again, we will distinguish between two following situations.

\vspace{2mm}

{\sf Case 1:} supp $N_c \cap (0,+\infty)\not=\emptyset$.  Then there exists $m >0$ such that $\rho_1:= \int_0^mN_c(s,\tau)ds >0$ and 
$$
\int_\R y(t-s)N_c(s,\tau)ds \geq \int_0^my(t-s)N_c(s,\tau)ds \geq  \rho_1 y(t), \quad t \in \R.
$$

{\sf Case 2:} supp $N_c \cap (0,+\infty)=\emptyset$.  Then, by Lemma \ref{USAI},  we have,  for $t \in \R$,  
$$
(y*N_c)(t) = \int_{-\infty}^0y(t-s)N_c(s,\tau)ds \geq  y(t) \int_{-\infty}^0e^{s/(c(1+\gamma))}N_c(s,\tau)ds=:\rho_2 y(t).
$$
In every case, (\ref{rh}) holds with $\rho = \min\{\rho_1,\rho_2\}$. 

\vspace{2mm}

Next, since $\phi(+\infty)=1$, we can indicate $T_0$ sufficiently large to satisfy
$$
\frac{\phi(t)}{1+\gamma \phi*N_c(t)} > 0.5/(1+\gamma), \quad t \geq T_0. 
$$ 
With the positive number  
$$
\kappa = \frac{0.5\rho}{1+\gamma},
$$
we can rewrite equation (\ref{efy}) as 
$$
y''(t)-cy'(t) - \kappa y(t) = h(t), \quad \mbox{where} \ h(t):= \phi(t)\frac{y*N_c(t)}{1+\gamma \phi*N_c(t)} - \kappa y(t), \ t \in \R. 
$$
Importantly, for $t \geq T_0$, 
$$
h(t) > y*N_c(t)\left(\frac{0.5}{1+\gamma} - \frac{\kappa}{\rho}\right)=0, 
$$
so that, arguing as in \cite[Lemma 20, Claim I]{GT}, we conclude that 
$$
y(t) \leq y(s)e^{l(t-s)}, \quad t \geq s \geq T_0, \quad \mbox{where} \ \alpha(\phi) \leq l:= 0.5\left(c-\sqrt{c^2+4\kappa}\right)<0.  
$$
Hence, by Remark \ref{Re1}, $\frak{I}_{c,\tau}(l)$ is a finite number.  
Combining the latter exponential  estimate with results of  Lemma \ref{USAI} (if supp $N_c \cap (0,+\infty)=\emptyset$) or inequality (\ref{rha}) (if supp $N_c \cap (0,+\infty)\not=\emptyset$), we conclude that $y(t)$ has an exponential rate of convergence at $+\infty$.  Moreover, $y'(t)$ has the same property because of the estimates
$$R(t):= \phi(t)\frac{y*N_c(t)}{1+\gamma \phi*N_c(t)}\leq  y*N_c(t) =  \int_{-\infty}^{t-T_0} N_c(s,\tau)y(t-s)ds+
$$
$$ \int^{+\infty}_{t-T_0} N_c(s,\tau)y(t-s)ds \leq  \int_{-\infty}^{t-T_0} N_c(s,\tau)y(T_0)e^{l(t-s-T_0)}ds+ \int^{+\infty}_{t-T_0} N_c(s,\tau)e^{-ls}e^{ls}ds \leq $$
$$ \leq  e^{l(t-T_0)} \int_{-\infty}^{t-T_0} N_c(s,\tau)e^{-ls}ds+  e^{l(t-T_0)} \int^{+\infty}_{t-T_0} N_c(s,\tau)e^{-ls}ds =  e^{l(t-T_0)}\frak{I}_{c,\tau}(l)
$$
and 
$$
y'(t)= -\int_t^{+\infty}e^{c(t-s)}R(s)ds\geq -\frak{I}_{c,\tau}(l)e^{-lT_0}\int_t^{+\infty}e^{c(t-s)}e^{ls}ds= \frac{\frak{I}_{c,\tau}(l)e^{-lT_0}}{l-c}e^{lt}.
$$
The latter representation of $y'(t)$ is deduced from (\ref{efy}) which also implies that 
\begin{equation}\label{efyk}
y''(t)-cy'(t) - \left(\frac{1}{1+\gamma}+\epsilon(t)\right){(y*N_c)(t)}=0, \quad t \in \R,
\end{equation}
where 
$$
\epsilon(t):= \frac{\phi(t)}{1+\gamma \phi*N_c(t)}- \frac{1}{1+\gamma} = \frac{\gamma y*N_c(t)-(1+\gamma)y(t)}{(1+\gamma)(1+\gamma \phi*N_c(t))} = O(e^{lt}), \ t \to +\infty.  
$$
Then, in view of Remark \ref{Re1}, an application of \cite[Lemma 22]{TAT} shows that $y(t) = w_0(t)(1+o(1)), \ t \to +\infty$,  where $w_0(t)$ is a non-zero eigensolution of the equation 
$w''(t)-cw'(t) - {(w*N_c)(t)}/(1+\gamma)=0$ corresponding to some its negative eigenvalue $\hat z$. As we have already mentioned, the multiplicity of $\hat z$ is less or equal to 4. This proves the second representation in (\ref{are}). 

\vspace{2mm}

 \noindent \underline{Asymptotic representation of $\phi$ at $-\infty$}.  Since the linear equation $y''-cy'+y=0$ with $c \geq 2$ is exponentially unstable, then so is equation (\ref{R}) with $R(-\infty)=0$ at 
 $-\infty$. This assures at least the exponential rate of convergence of $\phi(t)$ to $0$ at $-\infty$.  On the other hand, $\phi(t)$ has no more than exponential rate of decay 
 at $-\infty$, cf.  \cite[Lemma 6]{ST}.  Again, an application of \cite[Lemma 22]{TAT} shows that $y(t) = v_0(t)(1+o(1)), \ t \to +\infty$,  where $v_0(t)$ is a non-zero eigensolution of the equation 
$v''(t)-cv'(t) +v(t)=0$ corresponding to one of the positive eigenvalues  $z_1, z_2$. Finally, since the function $F(u,v)$ given in (\ref{F}) satisfies the sub-tangency condition at the zero equilibrium, we conclude that  the correct eigenvalue in our case is precisely $z_1$, see \cite[Section 7]{GT}  for the related computations and further details. 
 \end{proof} 
\begin{corollary}  \label{Com} Let $\psi(t), \phi(t)$ be {different} monotone wavefronts to  equation (\ref{f1}). 
Then there exist $t_3, t_4$ such that $\psi(t+t_3) \not= \phi(t+t_4)$ for all $t \in \R$ meanwhile $\phi(t+t_3),$ $\psi(t+t_4)$ have the same main asymptotic terms at $-\infty$:
$$
\phi(t+t_3) = (-t)^je^{z_1t}(1+o(1)), \ \psi(t+t_4) = (-t)^je^{z_1t}(1+o(1)), \ t \to -\infty. 
$$
\end{corollary}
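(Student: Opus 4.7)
The plan is to derive Corollary~\ref{Com} as an essentially immediate consequence of Lemma~\ref{USA} combined with the meaning of ``different'' in the statement of Theorem~\ref{T12} (i.e., distinct modulo translation). First, I would apply Lemma~\ref{USA} to the monotone wavefront $\phi$ to extract a shift $t_3\in\R$ with
\[
\phi(t+t_3) = (-t)^j e^{z_1 t}(1+o(1)), \quad t \to -\infty,
\]
where $j=0$ if $c>2$ and $j=1$ if $c=2$. Applying the same lemma to $\psi$ produces a shift $t_4 \in \R$ realising the identical leading term; crucially, both the power $j$ and the rate $z_1$ depend only on the common wave speed $c$, hence coincide for the two fronts. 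This delivers the matching-asymptotics clause of the corollary without further work.

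Next, I would establish the non-equality clause by contradiction. If one had $\phi(t+t_3)\equiv \psi(t+t_4)$ on $\R$, then the substitution $s=t+t_4$ would give $\psi(s)=\phi(s+t_3-t_4)$ for every $s\in\R$, i.e.\ $\psi$ would be a rigid translate of $\phi$. This directly contradicts the hypothesis that $\phi$ and $\psi$ are \emph{different} monotone wavefronts in the sense of Theorem~\ref{T12} (uniqueness up to translation). Hence, with the shifts supplied by Lemma~\ref{USA}, the functions $\phi(\cdot+t_3)$ and $\psi(\cdot+t_4)$ must disagree on $\R$, which is the desired conclusion.

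The genuine obstacle does not reside in the corollary itself but in Lemma~\ref{USA}, which already did the hard work: the delicate step is ruling out the larger root $z_2$ in favour of $z_1$ as the exact decay rate at $-\infty$. That selection uses the sub-tangency of the reaction term $F(u,v)$ (defined in~(\ref{F})) at the trivial equilibrium, which \emph{is} available at $0$ even for $\gamma>0$ (in contrast to the equilibrium $1$, as emphasised in the discussion following Theorem~\ref{T12}); combined with the asymptotic integration machinery cited from \cite{TAT,GT}, this pins down the coefficient of the leading term so that the shifts $t_3, t_4$ actually exist. Once Lemma~\ref{USA} is in hand, Corollary~\ref{Com} is a one-line packaging statement providing the canonical ``aligned-at-$-\infty$'' starting configuration of two distinct wavefronts, on which the subsequent sliding argument of Berestycki--Nirenberg type (invoked in the proof of Theorem~\ref{T12}) can be deployed.
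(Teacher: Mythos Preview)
Your reading of the non-equality clause is too weak, and this is a genuine gap. The assertion ``$\psi(t+t_3)\not=\phi(t+t_4)$ for all $t\in\R$'' is a \emph{pointwise} statement: the two shifted profiles must differ at \emph{every} single $t$. This is exactly what the proof of Theorem~\ref{T12} consumes: it sets $w(t)=(\psi(t)-\phi(t))e^{-\lambda t}$ and needs $w(t)>0$ on all of $\R$ before evaluating equation~(\ref{38}) at a global maximizer. Your argument only rules out $\phi(\cdot+t_3)\equiv\psi(\cdot+t_4)$, which is far short of what is required; two distinct monotone profiles with identical leading terms at $-\infty$ can perfectly well cross.

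Consequently, the hard work does not sit only inside Lemma~\ref{USA}; the substantive content of Corollary~\ref{Com} is precisely the sliding argument that you thought belonged to Theorem~\ref{T12}. In the paper, one first normalises via Lemma~\ref{USA}, then slides $\psi$ to the minimal $a_*$ with $\psi(\cdot+a_*)\ge\phi$, and proves strictness $\psi(\cdot+a_*)>\phi$ by showing that a touching point would force $({\mathcal F}\psi)(t+a_*)=({\mathcal F}\phi)(t)$ on a half-line, which is then propagated to all of $\R$ (via the integral representation when $\mathrm{supp}\,N_c\cap(0,\infty)\ne\emptyset$, and via a uniqueness argument for the delay equation with initial segment when $\mathrm{supp}\,N_c\subset(-\infty,0]$). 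Only after this does one check whether the asymptotics at $-\infty$ still match; if not (the case $\kappa>1$), a second slide from the other side is needed, exploiting that the optimal $a_*$ then forces equal asymptotics at $+\infty$. None of this can be bypassed: without the strict ordering, the sign analysis in the proof of Theorem~\ref{T12} collapses.
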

\begin{proof} By Lemma \ref{USA}, there are $t_1, t_2, t_1', t_2', q \in \{0,1,2,3\},$ such that (\ref{are}) holds together with 
\begin{equation}\label{arek}
\psi(t+t_1') = (-t)^je^{z_1t}(1+o(1)), \ t \to -\infty, \quad \psi(t+t_2') = 1-t^qe^{z''t}(1+o(1)), \ t \to +\infty,
\end{equation}
where $z''=z''(\psi)$ is a negative root of the characteristic equation $\chi_+(z,c,\tau)=0$. After realizing appropriate translations of profiles, 
without loss of generality, we can assume that $t_1=t_1'=0$. 

Suppose that $\hat z\geq z''$, then there is sufficiently large  $A>0$ such that 
$$
\psi(t+A) > \phi(t), \quad t \in \R. 
$$
Since $\psi(t)$ is an increasing function,  for all $a \geq A$, 
$$
\psi(t+a) > \phi(t), \quad t \in \R. 
$$
Let $\frak A$ denote the set of all $a$ such that the latter inequality holds. Clearly, $\frak A$ is a below bounded set and therefore the number  $a_*= \inf \frak A$
is finite and 
$$
\psi(t+a_*) \geq \phi(t), \quad t \in \R. 
$$
Observe that,  since $\psi, \phi$ are different wavefronts, the difference $\delta(t)= \psi(t+a_*)-\phi(t)$ is a non-zero non-negative function satisfying 
$\delta(-\infty)=\delta(+\infty) =0$. 
We claim that actually $\delta(t) >0, \ t \in \R$, i.e. 
\begin{equation}\label{IN}
\psi(t+a_*) > \phi(t), \quad t \in \R. 
\end{equation}
Indeed, otherwise there exists some $s'$ such that $\delta(s') =0$. From (\ref{f1e}), we have that 
$$
 \delta''(t) - c\delta'(t) +\delta(t) = {\mathcal H}(t),  
$$
where
$$ {\mathcal H}(t) = ({\mathcal F}\psi)(t+a_*) - ({\mathcal F}\phi)(t) \geq 0, \quad t \in \R.$$
Now, similarly to (\ref{fie}), 
$$
\delta(t) = \int_0^{+\infty}A(s) {\mathcal H}(t+s)ds.  
$$
Since $A(s) >0$ for $s >0$ and $\delta(s')=0$, we get immediately that ${\mathcal H}(s) =0$ for all $s\geq s'$. Clearly, this means that 
$$
 \psi(t+a_*) = \phi(t) \ (\mbox{i.e.}\ \delta(t)=0), \quad \psi*N_c(t+a_*) = \phi*N_c(t), \quad t \geq s'.
$$
Next, suppose that supp $N_c \cap (0,+\infty)\not=\emptyset$ and that 
$[s',+\infty)$ with $s'\in \R$ is the maximal interval where $\delta(t)=0$. Then $\int_{s'-s}^{+\infty}\delta(u)du >0$ for every $s>0$.  Furthermore, since 
$$\int_\R\delta(t-s)N_c(s,\tau)ds = \delta*N_c(t)=0, \quad t\geq s',$$
we obtain the following contradiction:  
$$
0= \int_{s'}^{+\infty}dt \int_0^{+\infty}\delta(t-s)N_c(s,\tau)ds=  \int_0^{+\infty}N_c(s,\tau)ds\int_{s'-s}^{+\infty}\delta(u)du >0,
$$
Thus $s'=-\infty$ and $\psi(t+a_*) = \phi(t)$ for all $t\in \R$ contradicting to our initial assumption that $\phi$ and $\psi$ are different wavefronts. This proves (\ref{IN}) when 
supp $N_c \cap (0,+\infty)\not=\emptyset$. 

In what follows, to simplify the notation, we suppose that $a_*=0$.  We will use another method when supp $N_c \subseteq  (-\infty,0]$.  In such a case, both functions $v_1(t):= 1-\phi(-t)$ and $v_2(t):=1- \psi(-t)$ satisfy 
the  initial value problem $$
 v_{-s'}(\sigma) = 1-\phi(s'-\sigma)= 1- \psi(s'-\sigma), \ \sigma \leq 0,\quad v'(-s') = \phi'(s') = \psi'(s')$$
for the following functional differential equation with unbounded delay: 
\begin{equation}\label{ud}
 v''(t) + cv'(t) -(1-v(t))\left(\frac{\int_{-\infty}^0v(t+s)N_c(s,\tau)ds}{1+\gamma- \gamma \int_{-\infty}^0v(t+s)N_c(s,\tau)ds}\right)=0. 
\end{equation}
Due to the optimal nature of $s'$, the solutions  $v_1(t)$ and $v_2(t)$ do not coincide on intervals $(-s', -s'+\epsilon)$ for $\epsilon >0$. 
On the other hand, since the function $g(x,y)= (1-x)y/(1+\gamma(1-y)) $ is globally Lipschitzian in the square $[0,1]^2\subseteq \R^2$, we can use the standard 
argumentation\footnote{It suffices to rewrite (\ref{ud}) in an equivalent form of a system of integral equations and then, after some elementary transformations,  to apply the Gronwall-Bellman inequality.} to prove that, for all sufficiently small $\epsilon>0$,  $v_1(t)=v_2(t)$ for  $t\in (-s', -s'+\epsilon)$. Thus again we get a contradiction proving (\ref{IN}) when 
supp $N_c \cap (0,+\infty)=\emptyset$. 

\vspace{2mm}

Next, clearly, 
$$
\kappa:=\lim_{t\to -\infty}\psi(t)/\phi(t) \geq 1, \quad \lim_{t\to +\infty}(1-\psi(t))/(1-\phi(t)) \leq 1.
$$
If $\kappa=1$, then $\psi(t)$ and $\phi(t)$ have the same asymptotic behavior at $-\infty$ and the corollary is proved. So, let suppose  that  $\kappa >1$. 
Then the optimal nature of $a_*=0$ implies that $\psi(t)$ and $\phi(t)$ have the same asymptotic behavior at $+\infty$. Thus  $\hat z = z'',$ $ k=q$ and 
$$
\lim_{t\to +\infty}{(1-\psi(t))}/{(1-\phi(t))} =  1.  
$$
But then, for all sufficiently large positive $b$,  
$$
\phi(t+b) > \psi(t), \ t \in \R. 
$$
We now can argue as before to establish the existence of the minimal positive $b_*$ such that 
$$
\phi(t+b_*) > \psi(t), \ t \in \R. 
$$
Since $b_*>0$ we have that
$$
 \lim_{t\to +\infty}(1-\phi(t+b_*))/(1-\psi(t)))=  e^{\hat zb_*} <1.
$$
 Then the  optimal character of $b_*$  implies  that 
  $$
\lim_{t\to -\infty}\psi(t)/\phi(t+b_*) =  1. 
$$
This completes the proof of Corollary   \ref{Com} (where $\phi(t+t_3) := \phi(t+b_*)$ and $\psi(t+t_4): = \psi(t)$ should be taken)  in the case $\kappa >1$.
 \end{proof}
\subsubsection{Proof of Theorem \ref{T12}}
Suppose that there are two different wavefronts, $\phi(t)$ and $\psi(t)$ to equation (\ref{f1}).  By Corollary  \ref{Com}, without restricting the generality, we can 
assume that 
$$
w(t):=(\psi(t)-\phi(t))e^{-\lambda t} >0, \quad t \in \R, \quad  w(t) = e^{(z_2-\lambda)t}(A+o(1)),\  t \to -\infty,  \ \ c \geq 2.
$$
Take now some $\lambda \in (z_1,z_2)$ if $c >2$ and $\lambda=z_1=z_2=1$ if $c=2$. Then $w(t)$ is bounded on $\R$ and satisfies
the following equation for all $t \in \R$:
\begin{equation}\label{38}
w''(t)-(c-2\lambda)w'(t)+(\lambda^2-c\lambda+1)w(t) = e^{-\lambda t}\left( ({\mathcal F}\psi)(t) -  ({\mathcal F}\phi)(t) \right).
\end{equation}
Next, if $c >2$, then $z_2-\lambda >0$ and therefore $w(-\infty)=w(+\infty)=0$. This means that, for some $t^*$, 
$$
w(t^*)= \max_{s \in \R}w(s) >0, \ w''(t^*) \leq 0, \ w'(t^*) =0. 
$$
Then, evaluating (\ref{38}) at $t^*$ and noting that $\lambda^2-c\lambda+1 <0$,  $({\mathcal F}\psi)(t) >  ({\mathcal F}\phi)(t),$ $t \in \R$,  we get a contradiction in signs. 
This proves the uniqueness of all non-critical wavefronts. 

Suppose now that $c=2$, then equation (\ref{38})  takes the form 
$$
w''(t) =e^{-\lambda t}\left( ({\mathcal F}\psi)(t) -  ({\mathcal F}\phi)(t) \right) >0,\quad  t \in \R.
$$
Since $w(+\infty)=0$, this implies that $w'(t)<0$ for all $t \in \R$. Clearly, the inequalities $w'(t)<0, \ w''(t) >0$, $t \in \R$, are not compatible with the boundedness of  $w(t)$ at $-\infty$. This proves the uniqueness of the minimal wavefront. 

\section*{Acknowledgements}
This work was supported by FONDECYT (Chile), project 1170466 (E. Trofimchuk and M. Pinto). S. Trofimchuk was supported by  FONDECYT (Chile) project  1190712.


\end{document}